 \theoremstyle{plain}
 \newtheorem{thm}{\bfseries Theorem}[section]
 \newtheorem{theorem}{\bfseries Theorem}
 \newtheorem{prop}[thm]{\bfseries Proposition}
 \newtheorem{observation}{\bfseries Observation}[section]
 \newtheorem{lem}[thm]{\bfseries Lemma}
 \newtheorem{cor}[thm]{\bfseries Corollary}
 \newtheorem{dfn}[thm]{\bfseries Definition}
 \theoremstyle{remark}
 \newtheorem{rem}[thm]{Remark}
 \numberwithin{equation}{thm}
 \def\lac{\overset{{\scriptstyle {\scriptscriptstyle \text{loc}}}}{\ll}}
 \def\lequ{\overset{{\scriptstyle {\scriptscriptstyle \text{loc}}}}{\sim}}
  \renewcommand{\leq}{\leqslant}
\renewcommand{\geq}{\geqslant}
\title[Equivalence-Singularity Dichotomy in Markov Measures]{Equivalence-Singularity Dichotomy in Markov Measures}
\subjclass[2010]{60J10, 28A35, 60F20}
\keywords{Markov chains, Markov fields, Kakutani dichotomy, zero-one law}
\author[Nachi Avraham-Re'em]{\bfseries Nachi Avraham-Re'em}
\address{
Einstein Institute of Mathematics \\
Edmond J. Safra Campus (Givat-Ram)\\
The Hebrew University of Jerusalem\\
9190401\\
Israel}
\email{nachman.avraham@mail.huji.ac.il}
\begin{document}

\vspace{18mm} \setcounter{page}{1} \thispagestyle{empty}

\begin{abstract}
We establish an equivalence-singularity dichotomy for a large class of one-dimensional Markov measures. Our approach is new in that we deal with one-sided and two-sided chains simultaneously, and in that we do not appeal to a 0-1 law. In fact we deduce a new 0-1 law from the dichotomy.
\end{abstract}

\maketitle

\tableofcontents

\section{Introduction}

A classical result by Kakutani asserts that any two locally-equivalent probability \emph{product} measures are either equivalent or mutually singular, and that there is a criteria for this dichotomy based on the convergence of a certain series of numbers \cite{kakutani}. The question of the extent to which this dichotomy holds in Markov chains was attributed to Vershik by Lodkin \cite{lodkin1971absolute}, and to the best of our knowledge the full answer is still unknown.

As Kakutani himself mentioned, such dichotomy is related to the tail 0-1 law. Therefore many authors established such dichotomies for other types of measures on product spaces under the assumption that the measures satisfy the tail 0-1 law; see \cite{lodkin1971absolute, lepage1972, lepage1975, ritter1979kakutani, kabanovlipcershiryaev1980, engelbertshiryaev, sato1991}. Also the well-known Feldman–Hájek dichotomy for Gaussian measures is a consequence of a certain 0-1 law; see \cite[$\mathsection6$ Theorem 5]{shiryaev1995p}. Indeed, using some kind of 0-1 law for this purpose seems natural as described by LePage \& Mandrekar: "\textit{Invariably, some type of 0-1 law is operative and suspected of forcing the equivalence-singularity dichotomy}." \cite{lepage1972}. Outside the scope of product spaces, there is a similar dichotomy for Riesz-product measures due to Brown \& Moran that does not make use of a 0-1 law. See \cite{brown1974orthogonality} and the references therein.

Shortly after Kakutani's work was published, it was discovered that one can describe the Hahn-Lebesgue decomposition of any pair of measures on a filtered space using martingales. We introduce this in detail in Section \ref{Section: Hahn-Lebesgue}. This decomposition was used thence to establish many criteria for equivalence and for singularity of measures in various contexts. Computing this decomposition for product measures reveals that it consists of a tail event, thus the Kakutani's dichotomy follows from the Kolmogorov 0-1 law. A little further effort, but yet elementary, is required to establish also the Kakutani's criteria; see for example
\cite[Theorem 4.3.8]{durrett2019probability}. 

Here we use this decomposition to establish an equivalence-singularity dichotomy for a large class of Markov measures, denoted by $\mathscr{S}$, without appealing to a 0-1 law. This class contains, among others, those Markov measures whose $M$-step transition probabilities stay away from zero for some positive integer $M$. See Definition \ref{Definition: Classes}. To prove the dichotomy for the class $\mathscr{S}$ we broaden our scope to (one-dimensional) \emph{Markov fields} rather than Markov chains, and this will enable us to deal also with two-sided Markov chains which were not considered by the previously mentioned authors. This gives us the following applications. First, we derive a new tail 0-1 law from our dichotomy for the class $\mathscr{S}$ (see Theorem \ref{Theorem C}). In particular, this shows that the Markov fields in $\mathscr{S}$ are in fact Markov chains (see Corollary \ref{Corollary: MFisMC}). Two more applications to ergodic theory will be given in Section \ref{Section: Applications}.

\subsubsection*{Outline of the Proof}
The computation of the Hahn-Lebesgue decomposition of Markov fields reveals that it consists of an event determined by the convergence of a random series with Markovian dependence. Then we show that for the Markov fields in $\mathscr{S}$ such series are well-behaved in the following sense. First, using some kind of probabilistic pigeonhole principle (Lemma \ref{Lemma: Fatou}) we are able to control the probability of such series to converge. Second, we exploit few class-properties of the Markov fields in $\mathscr{S}$ to use an exhaustion argument (Lemmas \ref{Lemma: Fatou Markov} and \ref{Lemma: Hereditary}).

\subsection*{Acknowledgment}

I thank my advisor, Zemer Kosloff, for his supportive guidance. I also thank Houcein El Abdalaoui and Yair Shenfeld for sending me suggestions and corrections, and the referee for their time and for their useful comments.

\section{Preliminaries and Results}

In this work all spaces are standard Borel spaces; all σ-algebras are contained in the Borel σ-algebra; all measures are Borel probability measures; and, all sets are Borel. We mostly follow the notations of \cite[$\mathsection$6]{shiryaev1995p}.

Let $\nu$ and $\mu$ be measures on a space $X$. We say that $\nu$ is \textit{absolutely continuous} with respect to $\mu$, and we write $\nu\ll\mu$, if $\mu\left(E\right)=0$ implies $\nu\left(E\right)=0$ for every set $E$ of $X$. If both $\nu\ll\mu$ and $\mu\ll\nu$ then we call $\nu$ and $\mu$ \textit{equivalent} and we write $\nu\sim\mu$. On the other extreme, we call $\nu$ and $\mu$ \textit{mutually singular}, and we write $\nu\perp\mu$, if there exists a set $E$ of $X$ such that $\nu\left(E\right)=0$ and $\mu\left(E\right)=1$.

A \textit{filtration} $\left(\mathcal{A}_n\right)_{n\geq0}$ of $X$ is an increasing sequence of σ-algebras that all together are generating the Borel σ-algebra. We follow the convention $\mathcal{A}_0=\left\{\emptyset,X\right\}$. Once we fixed a filtration $\left(\mathcal{A}_n\right)_{n\geq0}$ on $X$, for every measure $\nu$ on $X$ we abbreviate $\nu_{n}=\nu\mid_{\mathcal{A}_{n}}$ for $n\geq0$. A measure $\nu$  said to be \textit{locally absolutely continuous} with respect to another measure $\mu$, and we write $\nu\lac\mu$, if $\nu_{n}\ll\mu_{n}$ for every $n\geq0$. If both $\nu\lac\mu$ and $\mu\lac\nu$ we say that $\nu$ and $\mu$ are \textit{locally equivalent} and we write $\nu\lequ\mu$. Obviously, local absolute continuity is necessary for absolute continuity.

Let $\mathbb{I}$ stand either for the one-sided sequence of non-negative integers $\mathbb{Z}_{\geq0}$ or for the two-sided sequence of integers $\mathbb{Z}$. For a finite set $\mathcal{S}$ we consider the space $X=\mathcal{S}^{\mathbb{I}}$ with its usual Borel product σ-algebra. Let $\left(X_n\right)_{n\in\mathbb{I}}$ be the coordinate random variables of $X$, defined by $X_n\left(x\right)=x_n$ for $x=\left(x_k\right)_{k\in\mathbb{I}}\in X$. For a set $I\subset\mathbb{I}$ denote by $\sigma\left(X_i:i\in I\right)$ the σ-algebra generated by $\left\{X_i:i\in I\right\}$. The natural filtration $\left(\mathcal{A}_{n}\right)_{n\geq0}$ on $\mathcal{S}^{\mathbb{I}}$ is defined by $\mathcal{A}_n=\sigma\left(X_i:\left|i\right|\leq n\right)$.

For σ-algebras $\mathcal{A}$ and $\mathcal{A}'$ and a measure $\nu$ we write $\mathcal{A}\perp_{\nu}\mathcal{A}'$ if $\mathcal{A}$ and $\mathcal{A}'$ are independent with respect to $\nu$. A \textit{Markov chain} on $X$ is a measure $\nu$ on $X$ such that for every $n\in\mathbb{I}$,
\begin{equation}
\label{eq:1}\tag{$\mathbf{MC}$}
\sigma\left(X_{i}:i<n\right)\perp_{\nu}\sigma\left(X_{i}:i>n\right)\text{ conditioned on }\sigma\left(X_{n}\right).
\end{equation}
A \textit{Markov field} on $X$ is a measure $\nu$ on $X$ such that for every $n\geq1$,
\begin{equation}
\label{eq:2}\tag{$\mathbf{MF}$}
\sigma\left(X_{i}:\left|i\right|<n\right)\perp_{\nu}\sigma\left(X_{i}:\left|i\right|>n\right)\text{ conditioned on }\sigma\left(X_{-n},X_{+n}\right).
\end{equation}
In the one-sided case, the Markov chain property \ref{eq:1} and the Markov field property \ref{eq:2} coincide. In the two-sided case it is well-known that the Markov chain property \ref{eq:1} implies the Markov field property \ref{eq:2}, but the converse does not generally hold. See \cite[$\mathsection10$]{georgii2011gibbs} for the non-stationary case and \cite{chandgotia2014} for the stationary case.

We now define the classes of measures we discuss in this work.

\begin{dfn}
\label{Definition: Classes}
Let $X=\mathcal{S}^{\mathbb{I}}$ with the coordinate random variables $\left(X_n\right)_{n\in\mathbb{I}}$.
\begin{itemize}
    \item The class $\mathscr{R}$ consists of all measures $\nu$ on $X$ satisfying
$$\liminf_{\left|n\right|\to\infty}\nu\left(X_n=s\right)>0\text{ for all }s\in\mathcal{S}.$$
    \item The class $\mathscr{S}$ consists of all measures $\nu$ on $X$ satisfying
$$\liminf_{f\left(n,m\right)\to\infty}\nu\left(X_n=s,X_m=t\right)>0\text{ for all }s,t\in\mathcal{S},$$
where $f\left(n,m\right)=\min\left\{\left|n\right|,\left|m\right|,\left|n-m\right|\right\}$.
\end{itemize}
\end{dfn}

It is clear that $\mathscr{S}\subset\mathscr{R}$. Below we introduce a simple and concrete condition for a Markov field to be in the class $\mathscr{S}$ and, in particular, it will be clear that $\mathscr{S}$ contains all irreducible aperiodic stationary Markov chains (see Proposition \ref{Proposition: Doeblin}).

In order to discuss the one-sided case and the two-sided case simultaneously we use the following notations. Suppose that $\mathcal{S}$ is a finite set and consider the space $X=\mathcal{S}^{\mathbb{I}}$ with the coordinates variables $\left(X_n\right)_{n\in\mathbb{I}}$. Denote for $n\geq 1$,
$$\mathbb{S}=\begin{cases}
\mathcal{S} & \mathbb{I}=\mathbb{Z}_{\geq0}\\
\mathcal{S}\times\mathcal{S} & \mathbb{I}=\mathbb{Z}
\end{cases}\quad\text{and}\quad
\mathbb{X}_{n}=\begin{cases}
X_{n} & \mathbb{I}=\mathbb{Z}_{\geq0}\\
\left(X_{-n},X_{n}\right) & \mathbb{I}=\mathbb{Z}
\end{cases}.$$
In both cases we write $\mathbb{X}_0=X_0$.

For a Markov field $\nu$ on $X=\mathcal{S}^{\mathbb{I}}$ we denote by $\pi_0$ the initial distribution of $\mathbb{X}_0$ and the transition probabilities,
\begin{equation}
\label{eq:P_n}
P_{n}\left(s,t\right)=\nu\left(\mathbb{X}_{n+1}=t\mid\mathbb{X}_{n}=s\right)\text{ for }s,t\in\mathbb{S}\text{ and }n\geq1.
\end{equation}
Observe that in the one-sided case, $\pi_0$ together with $P_n$ for $n\geq1$ determine $\nu$ uniquely. However, in the two-sided case this is no longer true, as these transition probabilities only determine the joint distribution of $\left(X_{-\left(n+1\right)},X_{n+1}\right)$ conditioned on the joint distribution of $\left(X_{-n},X_{n}\right)$.

In the following Theorems \ref{Theorem A} and \ref{Theorem B} we consider a finite set $\mathcal{S}$ and a pair $\nu$ and $\mu$ of Markov fields on $\mathcal{S}^{\mathbb{I}}$, specified by $\left(\pi_0,P_n\right)_{n\geq1}$ and $\left(\lambda_0,Q_n\right)_{n\geq1}$, respectively. For such a pair let
\begin{equation}
\label{eq:D_n}
\mathrm{D}_{n}^{2}\left(\nu,\mu\right):=\sum_{s,t\in\mathbb{S}}\left(\sqrt{P_{n}\left(s,t\right)}-\sqrt{Q_{n}\left(s,t\right)}\right)^{2}\text{ for }n\geq1.
\end{equation}

The following Theorem \ref{Theorem A} generalizes \cite[Theorem 5.2]{avraham2022absolutely}.

\begin{theorem}[\textsc{A criteria for equivalence}]
\label{Theorem A}
Suppose that $\nu$ and $\mu$ are Markov measures such that $\nu\in\mathscr{R}$ and $\nu\lac\mu$. Then $\nu\ll\mu$ if and only if
$$\sum_{n\geq1}\mathrm{D}_{n}^{2}\left(\nu,\mu\right)<\infty.$$
In particular, if both $\nu,\mu\in\mathscr{R}$ and $\nu\lequ\mu$, then
$$\nu\ll\mu\iff\mu\ll\nu\iff\nu\sim\mu.$$
\end{theorem}

\begin{theorem}[\textsc{Equivalence-singularity dichotomy}]
\label{Theorem B}
Suppose that $\nu$ and $\mu$ are Markov measures in $\mathscr{S}$ and $\nu\lequ\mu$. Then either $\nu$ and $\mu$ are equivalent or that they are mutually singular. Moreover, $\nu\sim\mu$ if and only if
$$\sum_{n\geq1}\mathrm{D}_{n}^{2}\left(\nu,\mu\right)<\infty.$$
\end{theorem}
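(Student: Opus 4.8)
\emph{Plan.} The \emph{moreover} part will come essentially for free from Theorem~\ref{Theorem A}. Indeed, since $\mathscr{S}\subset\mathscr{R}$ and $\mathrm{D}_n^2(\nu,\mu)=\mathrm{D}_n^2(\mu,\nu)$, applying Theorem~\ref{Theorem A} to $\nu\lac\mu$ and again to $\mu\lac\nu$ gives
$$\nu\ll\mu\iff\sum_{n\geq1}\mathrm{D}_n^2(\nu,\mu)<\infty\iff\mu\ll\nu,$$
so that $\sum_n\mathrm{D}_n^2(\nu,\mu)<\infty$ forces $\nu\sim\mu$, while $\sum_n\mathrm{D}_n^2(\nu,\mu)=\infty$ forces $\nu\not\ll\mu$. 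Hence the theorem reduces to the implication: if $\sum_n\mathrm{D}_n^2(\nu,\mu)=\infty$ then $\nu\perp\mu$; equivalently, I will establish the dichotomy directly and then read off the criterion from the display above.

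Write $z_n=d\nu_n/d\mu_n$, which is defined since $\nu\lac\mu$. Classically $(z_n)$ is a nonnegative $\mu$-martingale, $(1/z_n)$ a nonnegative $\nu$-martingale, and under $\nu\lequ\mu$ singularity is equivalent to $z_n\to 0$ $\mu$-a.s., i.e.\ to $z_n\to\infty$ $\nu$-a.s. The content of the Hahn-Lebesgue computation for Markov fields (Section~\ref{Section: Hahn-Lebesgue}) is that this event is controlled by the random Hellinger series
$$W:=\sum_{n\geq1}\Big(1-\sum_{t\in\mathbb{S}}\sqrt{P_n(\mathbb{X}_n,t)\,Q_n(\mathbb{X}_n,t)}\Big)=\tfrac12\sum_{n\geq1}\mathrm{D}_n^2(\nu,\mu;\mathbb{X}_n),\qquad \mathrm{D}_n^2(\nu,\mu;s):=\sum_{t\in\mathbb{S}}\big(\sqrt{P_n(s,t)}-\sqrt{Q_n(s,t)}\big)^2,$$
in the sense that the singular part of $\nu$ w.r.t.\ $\mu$ is carried by $\{W=\infty\}$; thus $\nu\perp\mu\iff\nu(W=\infty)=1$ and $\nu\ll\mu\iff\nu(W<\infty)=1$. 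So it suffices to prove the zero-one statement $\nu(W<\infty)\in\{0,1\}$, and this is exactly where the class $\mathscr{S}$ enters (and where no $0$-$1$ law is invoked).

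To prove $\nu(W<\infty)\in\{0,1\}$, note that $\sum_{n\leq m}\mathrm{D}_n^2(\nu,\mu;\mathbb{X}_n)$ is always finite (each term is $\leq2$), so $\{W<\infty\}$ is $\sigma(X_k:|k|>m)$-measurable for every $m$. Using $\nu\in\mathscr{R}$ we may put $g_m(s):=\nu(W<\infty\mid\mathbb{X}_m=s)$ for $m$ large and all $s\in\mathbb{S}$. The Markov field property \ref{eq:2} gives, on one hand, $\nu(W<\infty\mid\mathcal{A}_m)=g_m(\mathbb{X}_m)$, so that $g_m(\mathbb{X}_m)\to\mathbf{1}_{\{W<\infty\}}$ $\nu$-a.s.\ by martingale convergence; on the other hand it gives the backward recursion $g_n(s)=\sum_t\nu(\mathbb{X}_m=t\mid\mathbb{X}_n=s)\,g_m(t)$ for $n<m$. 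Set $c:=\nu(W<\infty)=\sum_s g_m(s)\,\nu(\mathbb{X}_m=s)$, and suppose for contradiction $0<c<1$. From $g_m(\mathbb{X}_m)\to1$ on the positive-measure set $\{W<\infty\}$ together with finiteness of $\mathbb{S}$ — a probabilistic pigeonhole in the spirit of Lemma~\ref{Lemma: Fatou} — we get $\limsup_m\max_s g_m(s)=1$, hence a state $t^\ast$ with $\limsup_m g_m(t^\ast)=1$; applying the same reasoning to $1-g_m$ and $\{W=\infty\}$ yields a state $s^\ast$ with $\liminf_m g_m(s^\ast)=0$. Now $\nu\in\mathscr{S}$ provides $\varepsilon>0$ and $R$ with $\nu(\mathbb{X}_n=s^\ast,\mathbb{X}_m=t^\ast)\geq\varepsilon$ whenever $f(n,m)\geq R$, so (using $\nu(\mathbb{X}_n=s^\ast)\leq1$ and the recursion) $g_n(s^\ast)\geq\varepsilon\,g_m(t^\ast)$ whenever $f(n,m)\geq R$. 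Choosing $n=n_k\to\infty$ with $g_{n_k}(s^\ast)\to0$ and, for each $k$, an $m_k>2n_k$ with $g_{m_k}(t^\ast)>\tfrac12$, we have $f(n_k,m_k)=n_k\to\infty$, whence $\varepsilon/2<\varepsilon\,g_{m_k}(t^\ast)\leq g_{n_k}(s^\ast)\to0$, a contradiction. Therefore $c\in\{0,1\}$: either $\nu(W<\infty)=1$, so $\nu\ll\mu$ and then, by Theorem~\ref{Theorem A} once more, $\mu\ll\nu$, i.e.\ $\nu\sim\mu$; or $\nu(W=\infty)=1$, i.e.\ $\nu\perp\mu$. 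Together with the first paragraph this gives the dichotomy and the stated criterion.

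The main obstacle is twofold, and both pieces sit in the last two paragraphs. First, one must set up the Hahn-Lebesgue identification of the singular part of $\nu$ with $\{W=\infty\}$ for \emph{two-sided} Markov fields carefully — handling the boundary contribution from $\pi_0,\lambda_0$ and the transition $\mathbb{X}_0\to\mathbb{X}_1$, which are harmless multiplicative factors — so that the whole problem genuinely transfers to the random series $W$. Second, the mixing step requires selecting, along a common sequence, two times $n_k<m_k$ with $f(n_k,m_k)$ large at which $g$ is near $0$ and near $1$ respectively; the asymmetry $\liminf_m g_m(s^\ast)=0$ versus $\limsup_m g_m(t^\ast)=1$, combined with the separation $m_k>2n_k$ (which makes $f(n_k,m_k)=n_k$), is precisely what makes this selection possible. (Lemmas~\ref{Lemma: Fatou Markov} and~\ref{Lemma: Hereditary} serve to repackage this pigeonhole-plus-mixing argument through a class property of $\mathscr{S}$; that is convenient but not logically indispensable for the route sketched above.)
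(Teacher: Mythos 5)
Your proposal is correct, but the key step — showing $\nu\left(W<\infty\right)\in\left\{0,1\right\}$ — is carried out by a genuinely different route than the paper's. The reduction via Theorem~\ref{Theorem A} and the identification of singularity with $\nu\left(W=\infty\right)=1$ through Theorem~\ref{Theorem: KLS} and Proposition~\ref{Proposition: Local Hellinger} coincide with the paper. From there, however, the paper argues by contradiction on the event $E_\infty^{\mathsf{c}}$: it shows (Lemma~\ref{Lemma: Hereditary}) that $\mathscr{S}$ is stable under conditioning on any positive-measure set, so that Lemma~\ref{Lemma: Fatou} applied to $\nu_{E_\infty^{\mathsf{c}}}$ forces $\nu\left(E_\infty^{\mathsf{c}}\right)=0$; this exhaustion argument never mentions tail sigma-algebras. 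You instead observe that $\left\{W<\infty\right\}$ is a (two-sided) tail event, introduce $g_m\left(s\right)=\nu\left(W<\infty\mid\mathbb{X}_m=s\right)$, and combine L\'evy's upward convergence, the backward recursion from \ref{eq:2}, and the uniform two-point lower bound from $\mathscr{S}$ (with the $m_k>2n_k$ trick to keep $f\left(n_k,m_k\right)\to\infty$) to rule out $0<\nu\left(W<\infty\right)<1$. I checked the details — the conditional-independence identities, the pigeonhole over the finite state space, and the selection of $n_k,m_k$ — and they hold; the only minor caveats are that $g_m\left(s\right)$ is defined only for $m$ large (which you note via $\mathscr{R}$) and that the recursion should be restricted to $t$ with $\nu\left(\mathbb{X}_n=s,\mathbb{X}_m=t\right)>0$, harmless for the lower bound. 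The trade-off: your argument is in effect a direct proof of the tail $0$-$1$ law for the event $\left\{W<\infty\right\}$ (and it works verbatim for any tail event), so it reverses the paper's logical order — the paper deliberately derives Theorem~\ref{Theorem C} \emph{from} the dichotomy, and its route yields the reusable class property $\nu_E\in\mathscr{S}$, which is needed again in the proof of Theorem~\ref{Theorem C}. Your route is self-contained and not circular, but it makes part of Theorem~\ref{Theorem C} redundant rather than a corollary, and it loses the "no $0$-$1$ law" flavor the paper advertises.
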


\section{Applications}
\label{Section: Applications}

We start by introducing a large class of Markov measures that belong to $\mathscr{S}$. For a Markov measure $\nu$ on $\mathcal{S}^{\mathbb{I}}$  consider the transition matrices $P_n\left(s,t\right)$ as defined in \ref{eq:P_n}. For $n\leq m$ denote $P^{\left(n,m\right)}=P_{n}\dotsm P_{m}$. We formulate the following proposition for the two-sided case. In the one-sided case it can be adapted with simple adjustments.

\begin{prop}
\label{Proposition: Doeblin}
Suppose that $\nu$ is a Markov measure on $\mathcal{S}^{\mathbb{Z}}$ with transition matrices $\left(P_n\right)_{n\in\mathbb{Z}}$ as in \ref{eq:P_n}. Then $\nu\in\mathscr{S}$ if the following conditions hold.
\begin{enumerate}
    \item There exists $0\leq\delta\leq 1/2$ such that
    $$P_{n}\left(s,t\right)=0\text{ or }P_{n}\left(s,t\right)\geq\delta\text{ for all }s,t\in\mathcal{S}\text{ and }n\in\mathbb{I}.$$
    \item There exists $M\geq 1$ such that $P^{\left(n,n+M\right)}$ is a positive matrix for all $n\in\mathbb{Z}$.
\end{enumerate}
\end{prop}

This type of condition is sometimes referred to as {\it Doeblin condition}. See for instance \cite{cohn1981paper}.

\begin{proof}
Observe that $P^{\left(n,n+M\right)}\geq\delta^{M}$ for all $s,t\in\mathcal{S}$ and $n\in\mathbb{Z}$. It follows that $P^{\left(n,n+N\right)}\geq\delta^{M}$ for all $s,t\in\mathcal{S}$ whenever $N\geq M$. Also, for all $t\in\mathcal{S}$ and $n\in\mathbb{Z}$,
$$\nu\left(X_{n}=t\right)=\sum_{s\in\mathcal{S}}\nu\left(X_{n-M}=s\right)P^{\left(n-M,n\right)}\left(s,t\right)\geq\delta^{M}.$$
Finally, by the Markov field property \text{\ref{eq:2}}, for all $s,t\in\mathcal{S}$ and $m-n\geq M$,
$$\nu\left(X_{n}=s,X_{m}=t\right)=P^{\left(n,m\right)}\left(s,t\right)\nu\left(X_{n}=t\right)\geq\delta^{2M}.\qedhere$$
\end{proof}

\subsection{The Double Tail 0-1 Law}

As we mentioned above, the proof we present for Theorem \ref{Theorem B} makes no reference to a 0-1 law. We rather deduce the double tail 0-1 law for the Markov measures in $\mathscr{S}$ from our dichotomy. Recall that the tail σ-algebra of $\mathcal{S}^{\mathbb{I}}$ with respect to the natural filtration $\left(\mathcal{A}_n\right)_{n\geq0}$ is given by
$$\mathcal{T}:=\bigcap_{n\geq1}\sigma\left(X_i:\left|i\right|>n\right).$$
We refer to $\mathcal{T}$ as \emph{one-sided tail} when $\mathbb{I}=\mathbb{Z}_{\geq 0}$ or \emph{double tail} when $\mathbb{I}=\mathbb{Z}$. Recall that the double tail is not only larger than each of the one-sided tails, from the left or from the right, but also larger then the σ-algebra generated by both the right one-sided tail and the left one-sided tail. See for instance \cite[Sections 2.5 and 2.6]{bradley2005basic} and the references therein. We start by introducing a useful terminology.

\begin{dfn}[\textsc{Kakutani Class}]
A class of measures on $\mathcal{S}^\mathbb{I}$ will be called a \emph{Kakutani class} if every pair of measures in it that are locally-equivalent are either equivalent or mutually singular.
\end{dfn}

The classical Kakutani's dichotomy, and in fact the Kolmogorov 0-1 law, asserts that the class of product measures is a Kakutani class. As we mentioned in the introduction, other Kakutani Classes are the Gaussian measures (by the Feldman–Hájek dichotomy), and a certain large class of Riesz-product measures (by the Brown--Moran dichotomy). It is well-known that the class of Markov chains on $\mathcal{S}^{\mathbb{N}}$ that satisfy the tail 0-1 law is a Kakutani class \cite{lodkin1971absolute,lepage1975,engelbertshiryaev}, and by Theorem \ref{Theorem: KLS} and Proposition \ref{Proposition: Local Kakutani-Hellinger} below the same holds for Markov fields on $\mathcal{S}^{\mathbb{I}}$. It turns out that sometimes the converse is also true, namely one can deduce the tail 0-1 law for the individual members of a Kakutani class.

To see this let us first discuss the Kakutani class of product measures. If $\nu$ is a product measure on $\mathcal{S}^{\mathbb{I}}$ and $T$ is a tail event (either one-sided tail or double tail) with $\nu\left(T\right)>0$, it is evident that the conditional measure $\nu_T\left(\cdot\right):=\nu\left(\cdot\mid T\right)$ is again a product measure that have the same marginals as $\nu$. Then clearly $\nu=\nu_T$ so that $\nu\left(T\right)=1$ and the Kolmogorov 0-1 law follows.

Observe that in this argument, the property that $\nu$ and $\nu_T$ have the same marginals can be replaced by the weaker property that $\nu\lequ\nu_T$, and then, as long as we know that $\nu_T$ is again a product measure, by Kakutani's dichotomy either $\nu\sim\nu_T$ or $\nu\perp\nu_T$. Since $\nu\left(T\right)>0$ necessarily $\nu\sim\nu_T$ so that $\nu\left(T\right)=1$ and the Kolmogorov 0-1 law follows. This argument can be formulated in general as follows.

\begin{observation}
\label{Observation}
Let $\mathscr{K}$ be a Kakutani class and $\mathcal{T}$ a σ-algebra satisfying
\begin{equation}
\label{eq:3}\tag{$\mathbf{H}$}
\forall\nu\in\mathscr{K},\quad\forall T\in\mathcal{T},\quad\text{if }\nu\left(T\right)>0\text{ then }\nu_T\in\mathscr{K}\text{ and }\nu_T\lequ\nu.
\end{equation}
Then every member of $\mathscr{K}$ takes only the values $0$ and $1$ on $\mathcal{T}$.
\end{observation}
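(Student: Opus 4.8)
The plan is to reduce the zero-one law directly to the Kakutani dichotomy applied to the pair $\nu$ and $\nu_T$. Fix $\nu\in\mathscr{K}$ and $T\in\mathcal{T}$, and suppose for contradiction that $0<\nu(T)<1$. From $0<\nu(T)<1$ we may form both conditional measures $\nu_T(\cdot)=\nu(\cdot\mid T)$ and $\nu_{T^c}(\cdot)=\nu(\cdot\mid T^c)$, and we have the convex decomposition $\nu=\nu(T)\,\nu_T+\nu(T^c)\,\nu_{T^c}$. In particular both $\nu_T\ll\nu$ and $\nu_{T^c}\ll\nu$, and since $\nu(T)>0$ and $\nu(T^c)>0$ the converse local absolute continuities also follow from hypothesis \ref{eq:3}, which gives $\nu_T\lequ\nu$ (and likewise, applying \ref{eq:3} with the tail event $T^c\in\mathcal{T}$, $\nu_{T^c}\lequ\nu$). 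So $\nu_T$ and $\nu_{T^c}$ are both in $\mathscr{K}$ and both locally equivalent to $\nu$, hence locally equivalent to each other.

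Now the key observation is that $\nu_T$ and $\nu_{T^c}$ are mutually singular: indeed $\nu_T(T)=1$ while $\nu_{T^c}(T)=0$, so $T$ itself is a separating set. Since $\mathscr{K}$ is a Kakutani class and $\nu_T\lequ\nu_{T^c}$, the dichotomy forces either $\nu_T\sim\nu_{T^c}$ or $\nu_T\perp\nu_{T^c}$. We have just shown the first alternative fails (two equivalent measures cannot be mutually singular, as $\nu_T\sim\nu_{T^c}$ and $\nu_{T^c}(T)=0$ would give $\nu_T(T)=0$, contradicting $\nu_T(T)=1$), so we are in the singular case—but that is not yet the contradiction. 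The contradiction instead comes more cheaply: apply the Kakutani property to $\nu$ and $\nu_T$. Both lie in $\mathscr{K}$ and $\nu\lequ\nu_T$, so either $\nu\sim\nu_T$ or $\nu\perp\nu_T$. If $\nu\perp\nu_T$ there is a set $E$ with $\nu(E)=0$ and $\nu_T(E)=1$; but $\nu_T\ll\nu$ forces $\nu_T(E)=0$, a contradiction. Hence $\nu\sim\nu_T$, and in particular $\nu(T^c)=0$ because $\nu_T(T^c)=0$. This contradicts the assumption $\nu(T^c)>0$, completing the proof.

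I expect the only subtle point to be the bookkeeping around hypothesis \ref{eq:3}: the hypothesis is stated for a single tail event $T$ with $\nu(T)>0$, and one must note that $T^c$ is also in the sigma-algebra $\mathcal{T}$ (as $\mathcal{T}$ is a sigma-algebra) so that $\nu_{T^c}\in\mathscr{K}$ as well—though in fact, as the streamlined argument above shows, one does not even need $\nu_{T^c}$; working only with the pair $(\nu,\nu_T)$ suffices, using $\nu_T\ll\nu$ automatically and $\nu\ll\nu_T$ locally (hence the equivalence $\nu\lequ\nu_T$) from \ref{eq:3}. The main thing to get right is that the Kakutani dichotomy is applied to $\nu$ and $\nu_T$, that the singular alternative is excluded by the automatic inequality $\nu_T\ll\nu$, and that the surviving alternative $\nu\sim\nu_T$ immediately yields $\nu(T)=1$. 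There is no real obstacle here; it is a short formal argument.
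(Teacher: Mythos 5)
Your proposal is correct and follows essentially the same route as the paper: apply the Kakutani dichotomy to the pair $(\nu,\nu_T)$, rule out singularity because $\nu_T\ll\nu$ automatically, and conclude $\nu\sim\nu_T$, hence $\nu(T)=1$. The detour through $\nu_{T^c}$ is superfluous, as you yourself note, but it does no harm.
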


The following Theorem \ref{Theorem C} generalizes the 0-1 law of Kosloff \cite[Theorem 3.7]{kosloff2019proving}.

\begin{theorem}[\textsc{Tail 0-1 law}]
\label{Theorem C}
Let $\mathcal{S}$ be a finite set. For the space $\mathcal{S}^{\mathbb{I}}$ let $\mathcal{T}$ be either the one-sided tail if $\mathbb{I}=\mathbb{Z}_{\geq0}$ or the double tail if $\mathbb{I}=\mathbb{Z}$. Then every Markov field in $\mathscr{S}$ satisfies the tail 0-1 law.
\end{theorem}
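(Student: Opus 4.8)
The plan is to verify the hypotheses of Observation \ref{Observation} for the class $\mathscr{K}$ of Markov fields in $\mathscr{S}$ together with the appropriate tail sigma-algebra $\mathcal{T}$, and then read off the 0-1 law. The Kakutani-class assertion is immediate from Theorem \ref{Theorem B}: any two locally-equivalent members of $\mathscr{K}$ are either equivalent or mutually-singular. So the real content is property \ref{eq:3}, namely that for $\nu\in\mathscr{K}$ and $T\in\mathcal{T}$ with $\nu(T)>0$, the conditioned measure $\nu_T(\cdot)=\nu(\cdot\mid T)$ is again a Markov field in $\mathscr{S}$ and is locally equivalent to $\nu$. Once this is in place, Observation \ref{Observation} gives $\nu(T)\in\{0,1\}$ for every $T\in\mathcal{T}$, which is exactly the tail 0-1 law.

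First I would check that $\nu_T$ is a Markov field. The point is that $T$ is a tail event, so for each fixed $n$ it is measurable with respect to $\sigma(X_k:|k|>n)$ (in the two-sided case one uses that the two-sided tail sits inside every $\sigma(X_k:|k|>n)$; in the one-sided case the analogous statement holds). Conditioning on such a $T$ only reweights the "future/exterior" part of the field and does not disturb the conditional independence in \ref{eq:2}: if $\sigma(X_k:|k|<n)\perp_\nu\sigma(X_k:|k|>n)$ given $\sigma(X_k:|k|=n)$, then restricting attention to the event $T$ — which lies in the conditioning-compatible $\sigma(X_k:|k|>n)$ — preserves this, because $T$ can be absorbed into the "outside" coordinate. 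Concretely, for bounded functions $g$ of the interior and $h$ of the exterior, $E_{\nu_T}[gh\mid X_{|k|=n}] = E_\nu[g h\mathbf 1_T\mid X_{|k|=n}]/\nu_T$-normalization, and since $h\mathbf 1_T$ is again exterior-measurable, the Markov-field factorization for $\nu$ applies. A symmetric (and in fact easier) argument handles the one-sided case with \ref{eq:1}.

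Next I would check $\nu_T\lequ\nu$ and membership $\nu_T\in\mathscr{S}$. Local equivalence is where the tail nature of $T$ is essential once more: for any cylinder set $C\in\mathcal{A}_n$ one has, by the Markov-field conditional independence applied at level $n$, that $C$ and $T$ are conditionally independent given $\sigma(X_k:|k|=n)$, hence $\nu_T(C)=\nu(C\mid T)$ is a genuine reweighting of $\nu(C)$ by a Radon–Nikodym factor that is bounded above and below on $\{\mathbb{X}_n=s\}$ (using $\nu\in\mathscr{S}\subset\mathscr{R}$ so that $\nu(\mathbb{X}_n=s)$ stays bounded away from $0$); in particular $\nu_T(C)=0\iff\nu(C)=0$, so $\nu_n\sim(\nu_T)_n$ for every $n$, i.e. $\nu\lequ\nu_T$. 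For $\nu_T\in\mathscr{S}$: we must show $\liminf_{f(n,m)\to\infty}\nu_T(\mathbb{X}_n=s,\mathbb{X}_m=t)>0$. Here I expect to invoke a class-property / exhaustion idea in the spirit of Lemmas \ref{Lemma: Fatou Markov} and \ref{Lemma: Hereditary}: write $\nu(\mathbb{X}_n=s,\mathbb{X}_m=t)=\sum_j\nu(\mathbb{X}_n=s,\mathbb{X}_m=t\mid T_j)\nu(T_j)$ over a partition refining $T$, note $\nu(\cdot\mid T)=\nu_T$, and use the two-point lower bound for $\nu$ together with a uniform comparability between $\nu$ and $\nu_T$ on pairs of far-apart coordinates (again a conditional-independence consequence of the Markov-field property at the two "separating" levels $|k|=n$ and $|k|=m$, which exist once $f(n,m)$ is large). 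This forces the $\liminf$ for $\nu_T$ to be positive as well.

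The main obstacle will be this last point — showing $\nu_T\in\mathscr{S}$ — because it is not a pointwise or finite-level statement but an asymptotic lower bound that must survive the conditioning. The delicate part is making the comparison between $\nu(\mathbb{X}_n=s,\mathbb{X}_m=t)$ and $\nu_T(\mathbb{X}_n=s,\mathbb{X}_m=t)$ uniform as $f(n,m)\to\infty$: one needs that the "influence" of the tail event $T$ on the far-separated pair $(\mathbb{X}_n,\mathbb{X}_m)$ is controlled by a factor bounded away from $0$ and $\infty$, which should follow from iterating the Markov-field conditional independence across the two separating levels and using the $\mathscr{S}$-bound for $\nu$ itself to keep all the relevant conditional probabilities non-degenerate. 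Everything else — the Kakutani property from Theorem \ref{Theorem B}, the Markov-field stability of $\nu_T$, and the final appeal to Observation \ref{Observation} — is routine bookkeeping by comparison.
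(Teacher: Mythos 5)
Your overall architecture is exactly the paper's: invoke Observation \ref{Observation}, get the Kakutani property from Theorem \ref{Theorem B}, and verify property \ref{eq:3} by showing that $\nu_T$ is a Markov field in $\mathscr{S}$ with $\nu_T\lequ\nu$. The Markov-field stability of $\nu_T$ is argued just as in the paper (the tail event is exterior-measurable at every level, so it can be absorbed into the exterior factor of \ref{eq:2}). However, you have misjudged where the work lies. The step you flag as ``the main obstacle'' --- $\nu_T\in\mathscr{S}$ --- is in fact immediate: Lemma \ref{Lemma: Hereditary} is stated for an \emph{arbitrary} set $E$ with $\nu(E)>0$, so it applies verbatim to $E=T$ with no use of the tail structure whatsoever; no partition of $T$, no ``uniform comparability'' estimate is needed, and the exhaustion/approximation by $\mathcal{A}_k$-sets you would need is already carried out inside that lemma's proof. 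As written, your sketched alternative (decomposing over a partition refining $T$ and invoking an unproved two-sided control on the influence of $T$ on far-apart coordinates) would not close on its own.

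Conversely, the step you treat as routine --- $\nu\lac\nu_T$ --- is where the paper actually has to work. Your assertion that the Radon--Nikodym factor of $(\nu_T)_n$ against $\nu_n$ is ``bounded above and below on $\{\mathbb{X}_n=s\}$, using $\nu\in\mathscr{R}$'' is not justified: the danger is not that $\nu(\mathbb{X}_n=s)$ is small, but that $\nu(T\mid\mathbb{X}_{k+1}=s)$ could vanish for some state $s$ charged by $\nu$, which would kill local absolute continuity of $\nu$ with respect to $\nu_T$; positivity of the marginals of $\nu$ says nothing about this. The paper's fix is to first prove equivalence of the one-dimensional marginals by routing through a distant coordinate: choose $N$ large and $t$ with $\nu_T(\mathbb{X}_{n+N}=t)>0$, use the two-point bound defining $\mathscr{S}$ (not merely $\mathscr{R}$) to guarantee $\nu(\mathbb{X}_n=s,\mathbb{X}_{n+N}=t)>0$, and then factor through \ref{eq:2} to conclude $\nu_T(\mathbb{X}_n=s)>0$; only then does the conditional-independence factorization give $\nu_T(E)>0$ for a general $E\in\mathcal{A}_k$ with $\nu(E)>0$. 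With these two corrections your proposal becomes the paper's proof.
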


In view of Theorem \ref{Theorem B} and Observation \ref{Observation}, the proof of Theorem \ref{Theorem C} will follow once we show that the class of Markov fields in $\mathscr{S}$ satisfies the hereditary property \ref{eq:3} with respect to the tail σ-algebra. This will be shown in Section \ref{Section: Proofs}.

In \cite[Theorem 2.1]{zachary1983} (cf. \cite[Theorem 12.6]{georgii2011gibbs}), Zachary made the observation that Markov fields on $\mathcal{S}^{\mathbb{Z}}$ satisfy the following \emph{one-sided} property: For every $n\in\mathbb{Z}$ and $N\geq1$,
$$X_{n}\mid\sigma\left(X_{n-1},X_{n-2},\dotsc\right)\sim X_{n}\mid\sigma\left(X_{n-1},X_{n-N},X_{n-N-1},\dotsc\right).$$ This, together with the martingale convergence theorem, shows that a one-dimensional Markov field with trivial right-tail is a Markov chain. Then Theorem \ref{Theorem C} implies

\begin{cor}
\label{Corollary: MFisMC}
Every Markov field in $\mathscr{S}$ is in fact a Markov chain.
\end{cor}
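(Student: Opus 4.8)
The plan is to reduce the claim to a one-step Markov identity and then feed that the tail $0$-$1$ law of Theorem~\ref{Theorem C}. In the one-sided case $\mathbb{I}=\mathbb{Z}_{\geq0}$ there is nothing to do, since there the properties \ref{eq:1} and \ref{eq:2} coincide, so every Markov field is already a Markov chain. Assume then $\mathbb{I}=\mathbb{Z}$ and let $\nu$ be a Markov field on $\mathcal{S}^{\mathbb{Z}}$ with $\nu\in\mathscr{S}$. It suffices to verify, for every $n\in\mathbb{Z}$, the one-step forward Markov property
$$\nu\left(X_{n}\in\cdot\mid\sigma(X_{k}:k\leq n-1)\right)=\nu\left(X_{n}\in\cdot\mid X_{n-1}\right)\qquad\nu\text{-a.s.};$$
indeed, iterating this identity over the coordinates $n,n+1,\dots$ (with the tower property) shows that $\sigma(X_{k}:k\geq n)$ is, under $\nu$, conditionally independent of $\sigma(X_{k}:k\leq n-2)$ given $X_{n-1}$, and by the symmetry of conditional independence this is exactly \ref{eq:1}.

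First I would invoke Theorem~\ref{Theorem C}: since $\nu$ is a Markov field lying in $\mathscr{S}$, it satisfies the two-sided tail $0$-$1$ law, i.e.\ $\nu$ takes only the values $0$ and $1$ on the two-sided tail $\mathcal{T}$. As the left one-sided tail $\mathcal{T}_{-}:=\bigcap_{n\geq1}\sigma(X_{k}:k\leq-n)$ is a sub-sigma-algebra of $\mathcal{T}$, it is $\nu$-trivial as well. (The right one-sided tail is $\nu$-trivial too, but only one of the two will be needed.)

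Next I would combine Zachary's one-sided property, recalled above, with the reverse martingale convergence theorem. Fix $n$. By that property, for every $N\geq1$,
$$\nu\left(X_{n}\in\cdot\mid\sigma(X_{k}:k\leq n-1)\right)=\nu\left(X_{n}\in\cdot\mid\mathcal{G}_{N}\right)\ \ \nu\text{-a.s.},\qquad\mathcal{G}_{N}:=\sigma(X_{n-1})\vee\sigma(X_{k}:k\leq n-N).$$
The sigma-algebras $\mathcal{G}_{N}$ decrease with $N$, so by reverse martingale convergence $\nu(X_{n}\in\cdot\mid\mathcal{G}_{N})\to\nu(X_{n}\in\cdot\mid\mathcal{G}_{\infty})$ $\nu$-a.s., where $\mathcal{G}_{\infty}=\bigcap_{N}\mathcal{G}_{N}$. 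Since the left-hand side is independent of $N$, this yields $\nu(X_{n}\in\cdot\mid\sigma(X_{k}:k\leq n-1))=\nu(X_{n}\in\cdot\mid\mathcal{G}_{\infty})$ $\nu$-a.s. Now $\sigma(X_{n-1})\subseteq\mathcal{G}_{\infty}$, and it remains to check that, modulo $\nu$-null sets, $\mathcal{G}_{\infty}$ contributes nothing beyond $\sigma(X_{n-1})$ — which I would deduce from the $\nu$-triviality of $\mathcal{T}_{-}$; granting this, $\nu(X_{n}\in\cdot\mid\mathcal{G}_{\infty})=\nu(X_{n}\in\cdot\mid X_{n-1})$ $\nu$-a.s. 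This is the one-step forward Markov property, so by the first paragraph $\nu$ is a Markov chain.

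The step I expect to be the main obstacle is this last collapse of the conditioning sigma-algebra: an intersection of joins need not equal the join of the intersections, so pinning down $\mathcal{G}_{\infty}$ up to $\nu$-null sets — equivalently, showing that $\nu(X_{n}\in\cdot\mid\mathcal{G}_{\infty})$ is $\sigma(X_{n-1})$-measurable — is precisely the point where the two-sided tail $0$-$1$ law of Theorem~\ref{Theorem C} (through the triviality of $\mathcal{T}_{-}$) enters essentially, together with Zachary's gap-removal reformulation of the Markov field property. Granting this, the promotion of the one-step identity to the full Markov property \ref{eq:1} described in the first paragraph is a routine computation with conditional expectations.
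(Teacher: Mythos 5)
Your argument is essentially the paper's: Theorem \ref{Theorem C} gives triviality of the two-sided (hence each one-sided) tail, and Zachary's one-sided property together with reverse martingale convergence then collapses the conditioning down to $\sigma(X_{n-1})$. The step you flag as the main obstacle does go through here because $\mathcal{S}$ is finite, so $\sigma(X_{n-1})$ is atomic: working under each conditional measure $\nu\left(\cdot\mid X_{n-1}=s\right)$ (under which the left tail remains trivial, since $\nu\left(\cdot\mid X_{n-1}=s\right)\ll\nu$), the intersection-of-joins issue disappears and $\nu\left(X_{n}\in\cdot\mid\mathcal{G}_{\infty}\right)$ is a.s.\ constant on each atom, i.e.\ a function of $X_{n-1}$.
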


\subsection{Ergodic Theory of the Symbolic Shift}

We introduce now two applications in ergodic theory. Recall that a measurable transformation of a measure space $S:\left(X,\nu\right)\to\left(X,\nu\right)$ is called \textit{measure-preserving} if $\nu=\nu_{*}S$ and is called \textit{non-singular} if $\nu\sim\nu_{*}S$, where $\nu_{*}S=\nu\circ S^{-1}$. Let $S:\mathcal{S}^\mathbb{I}\to \mathcal{S}^\mathbb{I}$ be the left-shift defined by $\left(Sx\right)_{n}=x_{n+1}$ for $n\in\mathbb{I}$. For every stationary Markov chain with an appropriate initial distribution the shift is measure-preserving, while for every non-stationary measure the shift is never measure-preserving but is possibly non-singular. It is a simple observation that the shift $S$ satisfies $\nu\in\mathscr{R}\iff\nu_{*}S\in\mathscr{R}$ and $\nu\in\mathscr{S}\iff\nu_{*}S\in\mathscr{S}$ for every Markov chain $\nu$. Another simple observation is that $\nu\lequ S_*\nu$ for a Markov field $\nu$ if, and only if, $\nu$ is supported positively on a subshift of finite type. That is, for all $s,t\in\mathcal{S}$, either $P_{n}\left(s,t\right)>0$ for all $n\in\mathbb{I}$ or $P_{n}\left(s,t\right)=0$ for all $n\in\mathbb{I}$. The following results first introduced in \cite{avraham2022absolutely} for a special case, and they are direct corollaries of Theorems \ref{Theorem A} and \ref{Theorem B}.

\begin{prop}
Let $\nu\in\mathscr{S}$ be a Markov field supported on a subshift of finite type in $\mathcal{S}^{\mathbb{I}}$. Then the shift is non-singular if, and only if,
$$\sum_{n\geq1}\sum_{s,t\in\mathbb{S}}\left(\sqrt{P_{n}\left(s,t\right)}-\sqrt{P_{n-1}\left(s,t\right)}\right)^{2}<\infty,$$
where $P_n$ are defined in \ref{eq:P_n}. Otherwise, the shift is totally-singular in the sense that $\nu\perp\nu_{*}S$.
\end{prop}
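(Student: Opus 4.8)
The plan is to derive this Proposition directly from Theorems \ref{Theorem A} and \ref{Theorem B} by specializing them to the pair $\nu$ and $\mu=S_*\nu=\nu_*S$. First I would verify the hypotheses. Since $\nu$ is supported on a subshift of finite type, by the observation recalled just before the statement we have $\nu\lequ S_*\nu$; and since $\nu\in\mathscr S$, the shift-invariance of the class $\mathscr S$ for Markov chains (also recalled above, together with Corollary \ref{Corollary: MFisMC} which tells us the Markov field $\nu$ is actually a Markov chain, so that $S_*\nu\in\mathscr S$ as well) gives $S_*\nu\in\mathscr S$. Thus both measures lie in $\mathscr S$ and are locally equivalent, so Theorem \ref{Theorem B} applies: either $\nu\sim S_*\nu$ or $\nu\perp S_*\nu$, and the first alternative holds if and only if $\sum_{n\ge1}\mathrm D_n^2(\nu,S_*\nu)<\infty$. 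Non-singularity of $S$ is by definition $\nu\sim\nu_*S=S_*\nu$, so the dichotomy immediately yields the ``either/or'' structure of the conclusion, with totally-singular meaning $\nu\perp\nu_*S$.

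The remaining point is to identify the series $\sum_{n\ge1}\mathrm D_n^2(\nu,S_*\nu)$ with the stated series $\sum_{n\ge1}\sum_{s,t}\big(\sqrt{P_n(s,t)}-\sqrt{P_{n-1}(s,t)}\big)^2$. The key computation is that the transition probabilities of the shifted measure $S_*\nu$ are the shifted transition probabilities of $\nu$: writing $\big(\lambda_0,Q_n\big)_{n\ge1}$ for the data of $\mu=S_*\nu$ as in the setup of Theorems \ref{Theorem A} and \ref{Theorem B}, one has $Q_n=P_{n+1}$ for all $n\ge1$ (and $\lambda_0$ is the one-step-later marginal of $\mathbb X_0$), simply because $(Sx)_k=x_{k+1}$ so the law of $\mathbb X_{n+1}$ given $\mathbb X_n$ under $S_*\nu$ is the law of $\mathbb X_{n+2}$ given $\mathbb X_{n+1}$ under $\nu$ — and here one must be a little careful with the indexing in the two-sided case, where $\mathbb X_n=(X_{-n},X_n)$, so that the shift does not act as a clean re-indexing on the doubled alphabet $\mathbb S=\mathcal S\times\mathcal S$; nonetheless $S_*\nu$ is again a Markov field and its $\mathbb X$-transition kernels are obtained from those of $\nu$ by the index shift $n\mapsto n+1$ as claimed. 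Plugging $Q_n=P_{n+1}$ into the definition \ref{eq:D_n} of $\mathrm D_n^2$ gives $\mathrm D_n^2(\nu,S_*\nu)=\sum_{s,t\in\mathbb S}\big(\sqrt{P_n(s,t)}-\sqrt{P_{n+1}(s,t)}\big)^2$, and re-indexing the sum over $n$ (which does not affect convergence) turns this into the series written in the Proposition with $P_{n-1}$.

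The main obstacle I anticipate is precisely this bookkeeping in the two-sided case: one needs to check that $S_*\nu$, which is a priori only a Markov field (Corollary \ref{Corollary: MFisMC} reassures us it is even a Markov chain), has $\mathbb X$-transition matrices that are genuinely the index-shifted $P_n$, and that the definition of $\mathrm D_n^2$ in terms of the $\mathbb S$-valued transition kernels matches the ``$\sqrt{P_n}-\sqrt{P_{n-1}}$'' expression coordinatewise. Once that identification is pinned down, everything else is a direct invocation of Theorem \ref{Theorem B} (or Theorem \ref{Theorem A} applied twice, noting $\mathrm D_n^2(\nu,\mu)=\mathrm D_n^2(\mu,\nu)$) together with the elementary fact that finiteness of a series is unchanged under a shift of the summation index. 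I would also remark that the convention $\mathcal A_0=\{\emptyset,X\}$ and the treatment of the $n=1$ term of the series are harmless, since a single term never affects convergence.
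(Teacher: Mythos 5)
Your overall route is exactly the one the paper intends: the paper gives no proof of this proposition beyond declaring it a direct corollary of Theorems \ref{Theorem A} and \ref{Theorem B}, and your skeleton --- verify $\nu\lequ\nu_{*}S$ from the subshift-of-finite-type hypothesis, verify $\nu_{*}S\in\mathscr{S}$ (via Corollary \ref{Corollary: MFisMC} and the shift-invariance of $\mathscr{S}$ for Markov chains), invoke the dichotomy, and identify the series --- is the right one. In the one-sided case your identification $Q_{n}=P_{n+1}$ is correct and the argument is complete.

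The gap is exactly the two-sided point you flagged and then waved away: the assertion that the $\mathbb{X}$-transition kernels of $S_{*}\nu$ are ``obtained from those of $\nu$ by the index shift $n\mapsto n+1$'' is false. Writing $s=(s_{-},s_{+})$ and $t=(t_{-},t_{+})$, the kernel of $\nu_{*}S$ at level $n$ is
$$Q_{n}\left(s,t\right)=\nu\left(X_{-n}=t_{-},\,X_{n+2}=t_{+}\mid X_{-n+1}=s_{-},\,X_{n+1}=s_{+}\right),$$
because the symmetric window $[-(n+1),n+1]$ pulls back under $S$ to the asymmetric window $[-n,n+2]$. This agrees with $P_{n+1}$ only in the right coordinate (the step from site $n+1$ to $n+2$) and with $P_{n-1}$ only in the left coordinate (the step from site $-n+1$ to $-n$); it equals neither, so the term-by-term identification and the subsequent re-indexing do not go through as written. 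To repair it, use Corollary \ref{Corollary: MFisMC}: since $\nu\in\mathscr{S}$ is a Markov chain, each two-sided kernel factorizes, $P_{n}(s,t)=A_{n}(s_{-},t_{-})B_{n}(s_{+},t_{+})$ with $A_{n}$ the backward one-coordinate kernel at site $-n$ and $B_{n}$ the forward one at site $n$, and likewise $Q_{n}$ factorizes with backward part $A_{n-1}$ and forward part $B_{n+1}$. The Hellinger-type quantity tensorizes: for stochastic factors, $\sum_{t}\bigl(\sqrt{P(s,t)}-\sqrt{Q(s,t)}\bigr)^{2}=h_{A}^{2}+h_{B}^{2}-\tfrac{1}{2}h_{A}^{2}h_{B}^{2}$, which is comparable to $h_{A}^{2}+h_{B}^{2}$ with universal constants. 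Hence $\sum_{n}\mathrm{D}_{n}^{2}(\nu,\nu_{*}S)$ converges if and only if the backward-difference series and the forward-difference series both converge, and each of these matches the corresponding half of the displayed series up to a shift of the summation index. Only after this splitting is the ``re-index $n$'' step legitimate; with it, the proof closes.
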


An important question in ergodic theory is whether a measure $\nu$ on $\mathcal{S}^\mathbb{I}$ admits an equivalent shift-invariant measure. For a survey including the topic see \cite{danilenko2011ergodic}. The question of whether there exists some measure, not necessarily Markovian, that is equivalent to $\nu$ and invariant to the shift, is hard and is still unsolved in its most general form. Under more assumption on the dynamical properties of the shift some solutions to this question were given in \cite{danilenkolemanczyk, avraham2022absolutely}. However, from Theorem \ref{Theorem B} we are able to determine whether a Markov measure $\nu$ on $\mathcal{S}^\mathbb{I}$ admits an equivalent shift-invariant \emph{Markov} measure.

\begin{prop}
Let $\nu\in\mathscr{S}$ be a Markov field on $\mathcal{S}^\mathbb{I}$. Then $\nu$ is equivalent to some shift-invariant Markov field if, and only if, the limit $P\left(s,t\right):=\lim_{\left|n\right|\to\infty}P_n\left(s,t\right)$ exists for all $s,t\in\mathcal{S}$ and moreover,
$$\sum_{n\geq1}\sum_{s,t\in\mathbb{S}}\left(\sqrt{P_{n}\left(s,t\right)}-\sqrt{P\left(s,t\right)}\right)^{2}<\infty,$$
where $P_n$ are defined in \ref{eq:P_n}. In this case, the stationary Markov field specified by $P$ is shift-invariant and equivalent to $\nu$. Otherwise $\nu$ is mutually singular with all stationary Markov field.
\end{prop}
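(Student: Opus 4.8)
The plan is to read off both directions from Theorems \ref{Theorem A} and \ref{Theorem B} (and, for the last sentence, Theorem \ref{Theorem C}), by comparing $\nu$ — specified by $\left(\pi_0,P_n\right)_{n\ge1}$ — with the stationary Markov field attached to the limiting transition data. Throughout I use that the Markov field property \ref{eq:2} is precisely the Markov property for the blocked chain $\left(\mathbb X_n\right)_{n\ge0}$, so that $\nu\left(\mathbb X_n=s,\mathbb X_m=t\right)=\nu\left(\mathbb X_n=s\right)\bigl(P_nP_{n+1}\dotsm P_{m-1}\bigr)\left(s,t\right)$ for $1\le n<m$.

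For the backward implication, assume $P\left(s,t\right):=\lim_{\left|n\right|\to\infty}P_n\left(s,t\right)$ exists for all $s,t\in\mathbb S$ and that $\sum_{n\ge1}\sum_{s,t}\bigl(\sqrt{P_n\left(s,t\right)}-\sqrt{P\left(s,t\right)}\bigr)^{2}<\infty$. The first step is to note that $P$ is primitive on $\mathbb S$: because $\nu\in\mathscr S$ there are $c>0$ and $K\ge1$ with $\bigl(P_n\dotsm P_{n+K-1}\bigr)\left(s,t\right)\ge c$ for all large $n$ and all $s,t$ (this is the hypothesis $\liminf_{f\left(n,m\right)\to\infty}\nu\left(\mathbb X_n=s,\mathbb X_m=t\right)>0$ together with $\nu\left(\mathbb X_n=s\right)\le1$), and letting $n\to\infty$ gives $P^{K}\left(s,t\right)\ge c>0$; in particular $P$ has a strictly positive stationary distribution. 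The second step is to produce from $P$ a shift-invariant Markov field $\mu$ lying in $\mathscr S$: in the one-sided case $\mu$ is just the stationary Markov chain with transition matrix $P$, which is shift-invariant and, being irreducible and aperiodic, belongs to $\mathscr S$; the two-sided case is more delicate — here one must check that the hypotheses force the asymptotic one-step behaviour of $\nu$ at $\pm\infty$ to be mutually compatible, so that $P$ is the blocked transition of a stationary Markov chain on $\mathcal S^{\mathbb Z}$, which one then takes for $\mu$. The third step is $\nu\lequ\mu$: as $\mu$ has the constant transition $P$ with strictly positive marginals, this reduces to showing that $\nu$ is supported on the subshift of finite type with admissible transitions $\left\{\left(s,t\right):P\left(s,t\right)>0\right\}$, i.e. that $\operatorname{supp}P_n$ is independent of $n$ and equals $\operatorname{supp}P$; the summability hypothesis forces $P_n\left(s,t\right)\to0$ when $P\left(s,t\right)=0$, and the point is that for $\nu\in\mathscr S$ such a vanishing transition cannot be used at any finite time. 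The second and third steps are where I expect the real work to lie. Once they are in place one has $\mathrm D_n^2\left(\nu,\mu\right)=\sum_{s,t}\bigl(\sqrt{P_n\left(s,t\right)}-\sqrt{P\left(s,t\right)}\bigr)^{2}$, which is summable, and Theorem \ref{Theorem B} gives $\nu\sim\mu$; moreover $\mu$ is specified by $P$ and is shift-invariant, as required.

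For the forward implication, suppose $\nu\sim\mu$ with $\mu$ a shift-invariant Markov field, specified by $\left(\lambda_0,Q_n\right)_{n\ge1}$. Then $\nu\lequ\mu$, and since $\nu\in\mathscr S\subset\mathscr R$, Theorem \ref{Theorem A} yields $\sum_{n\ge1}\mathrm D_n^2\left(\nu,\mu\right)=\sum_{n\ge1}\sum_{s,t}\bigl(\sqrt{P_n\left(s,t\right)}-\sqrt{Q_n\left(s,t\right)}\bigr)^{2}<\infty$. It then suffices to know that $Q_n$ is constant in $n$, for then summability forces $P_n\to Q=:P$ entrywise, so the limit exists and the series in the statement equals $\sum_n\mathrm D_n^2\left(\nu,\mu\right)<\infty$; in the one-sided case $Q_n$ is constant because shift-invariance makes the law of $\left(X_n,X_{n+1}\right)$ independent of $n$, and in the two-sided case constancy of the blocked transition of a shift-invariant Markov field is obtained by combining shift-invariance with \ref{eq:2}. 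Finally, the ``Otherwise'' clause follows by contraposition together with the dichotomy: if the limit does not exist or the series diverges, then by the two implications above $\nu$ is equivalent to no stationary Markov field, and for a stationary Markov field $\mu$ with $\nu\lequ\mu$ the dichotomy (Theorem \ref{Theorem B} when $\mu\in\mathscr S$, and otherwise Theorem \ref{Theorem C} together with the tail description of the Hahn-Lebesgue decomposition) leaves only $\nu\perp\mu$; the case $\nu\not\lequ\mu$ is handled directly using shift-invariance of $\mu$.
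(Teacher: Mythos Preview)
The paper gives no proof of this proposition; it merely states that it (together with the preceding proposition) is a ``direct corollary of Theorems~\ref{Theorem A} and~\ref{Theorem B}.'' Your outline is exactly this: compare $\nu$ with a stationary Markov field having constant blocked transition, and read off equivalence (via summability of $\mathrm D_n^2$) or singularity from Theorems~\ref{Theorem A} and~\ref{Theorem B}. So the overall strategy agrees with the paper's intended one, and the technical points you flag --- constancy of $Q_n$ for a shift-invariant $\mu$, primitivity and well-posedness of the limit transition $P$, and local equivalence $\nu\lequ\mu$ --- are precisely the verifications the paper suppresses. Your observation that the Hahn--Lebesgue event $\bigl\{\sum_n\mathrm h_n^2(\nu,\mu)<\infty\bigr\}$ is a tail event (each $\mathrm h_n^2$ being $\sigma(\mathbb X_{n-1})$-measurable by Proposition~\ref{Proposition: Local Hellinger}), so that Theorem~\ref{Theorem C} yields the dichotomy for \emph{any} $\mu$ with $\nu\lac\mu$ regardless of whether $\mu\in\mathscr S$, is a genuine and useful addition beyond what the paper asserts.

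There is, however, one genuine gap in your treatment of the ``Otherwise'' clause. You dispose of the case $\nu\not\lequ\mu$ with the phrase ``handled directly using shift-invariance of $\mu$,'' but this does not go through without further argument. Failure of local equivalence only gives a cylinder set $E$ with, say, $\nu(E)>0=\mu(E)$; shift-invariance of $\mu$ then gives $\mu\bigl(\bigcup_k S^kE\bigr)=0$, but there is no reason for this union to carry full $\nu$-mass, so one cannot conclude $\nu\perp\mu$ on these grounds alone. Either the paper is tacitly restricting the final sentence to those stationary $\mu$ with $\nu\lequ\mu$ (which is consistent with its phrasing ``direct corollaries of Theorems~\ref{Theorem A} and~\ref{Theorem B},'' both of which assume local absolute continuity), or an additional argument specific to $\nu\in\mathscr S$ is required; in either case you should not leave this step to a one-line remark.
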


\section{The Hahn-Lebesgue Decomposition of Markov Measures}
\label{Section: Hahn-Lebesgue}

Here we describe the Hahn-Lebesgue decomposition of a pair of locally-equivalent measures on a filtered space, and compute this decomposition for Markov fields.

Let $X$ be a space with filtration $\left(\mathcal{A}_n\right)_{n\geq0}$ and measures $\nu$ and $\mu$. If $\nu\lac\mu$ then the local Radon-Nikodym derivatives
$$\mathrm{z}_{n}:=\frac{d\nu_{n}}{d\mu_{n}}\in L^{1}\left(\nu\right),\quad n\geq1,$$
are well-defined and taking non-negative finite values. In fact, $\left(\mathrm{z}_{n},\mathcal{A}_{n}\right)_{n\geq1}$ is a martingale with respect to $\mu$ satisfying $\mathbf{E}_{\mu}\left(\mathrm{z}_n\right)=1$ for $n\geq1$, so by the martingale convergence theorem
$$\mathrm{z}_{\infty}:=\lim_{n\to\infty}\mathrm{z}_{n}\text{ exists }\mu\text{-a.e.}$$
It is also true that the same limit holds $\nu$-a.e. as well. However, this only means that $\mathrm{z}_{\infty}$ exists and is well-defined almost everywhere for each of $\nu$ and $\mu$, but possibly not on the same set. For a comprehensive representation see \cite[$\mathsection6$]{shiryaev1995p}.

The first part of the following Theorem \ref{Theorem: KLS} was introduced, shortly after Kakutani's work, independently by Pfeiffer following Kawada \cite{pfeiffer} and Grenander \cite[$\mathsection4$]{grenander1950stochastic}. Later it became a folklore and used by many others, among them Vershik \& Lodkin \cite{lodkin1971absolute}; LePage \& Mandrekar \cite{lepage1975}; and Kabanov, Lipcer \& Shiryaev \cite{kabanovlipcershiryaev1978, kabanovlipcershiryaev1980, shiryaev1978}. The second part of the following Theorem \ref{Theorem: KLS} was established by Kabanov, Lipcer \& Shiryaev \cite{kabanovlipcershiryaev1978, kabanovlipcershiryaev1980}. 

\begin{thm}
\label{Theorem: KLS}
In the above setting we have the following.
\begin{enumerate}
  \item The decomposition $X=\left\{\mathrm{z}_{\infty}=\infty\right\}\cup\left\{\mathrm{z}_{\infty}<\infty\right\}$  is the Hahn-Lebesgue decomposition of $\nu$ with respect to $\mu$, so that $\mu\left(\mathrm{z}_\infty<\infty\right)=1$ and
  $$\nu\left(E\right)=\nu\left(E\cap\left\{\mathrm{z}_{\infty}=\infty\right\} \right)+\intop_{E}\mathrm{z}_{\infty}\left(x\right)d\mu\left(x\right),\,\,\forall\text{ set }E\subset X.$$
  \item Let $\mathrm{Z}_{n}:=\mathrm{z}_{n}\mathrm{z}_{n-1}^{-1}$ where $\mathrm{Z}_{n}:=0$ if $\mathrm{z}_{n-1}=0$. Then
$$\left\{\mathrm{z}_{\infty}<\infty\right\}=\left\{ \sum_{n\geq1}\left(1-\mathbf{E}_{\mu}\left(\sqrt{\mathrm{Z}_{n}}\mid\mathcal{A}_{n-1}\right)\right)<\infty\right\}\mod\nu.$$
  \end{enumerate}
\end{thm}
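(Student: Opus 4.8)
The plan is to handle the two items by quite different means. For item (1) I would pass to the reference measure $m:=\tfrac12(\nu+\mu)$ and set $a_{n}:=d\nu_{n}/dm_{n}$, $b_{n}:=d\mu_{n}/dm_{n}$, so that $a_{n}+b_{n}=2$ and $\mathrm{z}_{n}=a_{n}/b_{n}$. Since $\nu\lac\mu$ forces $\nu_{n}\ll\mu_{n}$, one checks $b_{n}>0$ $m$-a.e.\ (otherwise $a_{n}=0$ $m$-a.e.\ on the $\mu_{n}$-null set $\{b_{n}=0\}$, contradicting $a_{n}+b_{n}=2$), so $\mathrm{z}_{n}$ is well defined and finite. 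By L\'evy's upward theorem (the filtration generates the Borel $\sigma$-algebra and $a,b\le2$), $a_{n}\to a:=d\nu/dm$ and $b_{n}\to b:=d\mu/dm$ $m$-a.e., hence $\mathrm{z}_{n}\to\mathrm{z}_{\infty}$ exists $m$-a.e.\ with $\mathrm{z}_{\infty}=a/b$ on $\{b>0\}$ and $\mathrm{z}_{\infty}=\infty$ on $\{b=0\}$; as $\nu,\mu\ll m$ the same limit holds $\nu$- and $\mu$-a.e. Then $\mu(\mathrm{z}_{\infty}=\infty)=\mu(b=0)=\int_{\{b=0\}}b\,dm=0$, and splitting $\nu(E)=\int_{E}a\,dm$ over $\{b>0\}$ and $\{b=0\}$ and using $\mu=b\cdot m$ yields exactly $\nu(E)=\nu(E\cap\{\mathrm{z}_{\infty}=\infty\})+\int_{E}\mathrm{z}_{\infty}\,d\mu$; since $E\mapsto\int_{E}\mathrm{z}_{\infty}\,d\mu$ is $\ll\mu$ and $\nu(\,\cdot\,\cap\{\mathrm{z}_{\infty}=\infty\})$ is carried by the $\mu$-null set $\{\mathrm{z}_{\infty}=\infty\}$, this is the Hahn--Lebesgue decomposition.

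For item (2) I first record the $\nu$-a.s.\ facts to be used. From $\nu_{n}(\{\mathrm{z}_{n}=0\})=\int_{\{\mathrm{z}_{n}=0\}}\mathrm{z}_{n}\,d\mu_{n}=0$ we get $\mathrm{z}_{n}>0$ for all $n$, hence $\mathrm{z}_{n}=\prod_{k\le n}\mathrm{Z}_{k}$ with $\mathrm{Z}_{k}>0$, $\nu$-a.s.; and by item (1), $\nu(\mathrm{z}_{\infty}=0)=\int_{\{\mathrm{z}_{\infty}=0\}}\mathrm{z}_{\infty}\,d\mu=0$, so $\mathrm{z}_{\infty}\in(0,\infty]$ $\nu$-a.s. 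Put $\alpha_{n}:=\mathbf{E}_{\mu}(\sqrt{\mathrm{Z}_{n}}\mid\mathcal{A}_{n-1})$. Since $\mathbf{E}_{\mu}(\mathrm{Z}_{n}\mid\mathcal{A}_{n-1})\le1$, Jensen gives $\alpha_{n}\le1$; and $\{\alpha_{n}=0\}\subseteq\{\mathrm{z}_{n}=0\}$ mod $\mu$, so $\alpha_{n}>0$ $\nu$-a.s. Thus $V_{n}:=\prod_{k\le n}\alpha_{k}$ decreases $\nu$-a.s.\ to $V_{\infty}\in[0,1]$ with $\{V_{\infty}>0\}=\{\sum_{n}(1-\alpha_{n})<\infty\}$ mod $\nu$. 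The engine is the identity $\mathbf{E}_{\nu}\!\left(\mathrm{Z}_{n}^{-1/2}\mid\mathcal{A}_{n-1}\right)=\alpha_{n}$ $\nu$-a.s., obtained by testing against bounded $\mathcal{A}_{n-1}$-measurable functions, using $d\nu_{n}=\mathrm{z}_{n}\,d\mu_{n}$ and $\mathrm{z}_{n}=\mathrm{z}_{n-1}\mathrm{Z}_{n}$. Consequently $\mathrm{R}_{n}:=\mathrm{z}_{n}^{-1/2}/V_{n}=\prod_{k\le n}(\mathrm{Z}_{k}^{-1/2}/\alpha_{k})$ is a nonnegative $\nu$-martingale with $\mathrm{R}_{0}=1$, so $\mathrm{R}_{n}\to\mathrm{R}_{\infty}<\infty$ $\nu$-a.s., and letting $n\to\infty$ in $\mathrm{z}_{n}^{-1/2}=\mathrm{R}_{n}V_{n}$ gives $\mathrm{z}_{\infty}^{-1/2}=\mathrm{R}_{\infty}V_{\infty}$ $\nu$-a.s. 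On $\{V_{\infty}=0\}$ this forces $\mathrm{z}_{\infty}=\infty$, which already gives the inclusion $\{\mathrm{z}_{\infty}<\infty\}\subseteq\{\sum_{n}(1-\alpha_{n})<\infty\}$ mod $\nu$.

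The remaining, and main, difficulty is the reverse inclusion, i.e.\ $\nu(V_{\infty}>0,\ \mathrm{z}_{\infty}=\infty)=0$; on $\{V_{\infty}>0\}$ this says the martingale $\mathrm{R}_{n}$ does not degenerate to $0$. I would localize along the stopping times $\tau_{c}:=\inf\{n:V_{n+1}<c\}$ (each $V_{n+1}$ is $\mathcal{A}_{n}$-measurable), so that on $\{\tau_{c}=\infty\}\supseteq\{V_{\infty}>c\}$ one has $V_{n}\ge c$ and hence $\mathrm{z}_{n}^{-1/2}\le\mathrm{R}_{n}\le c^{-1}\mathrm{z}_{n}^{-1/2}$; since $1/\mathrm{z}_{n}$ is a nonnegative $\nu$-supermartingale, the stopped martingale $\mathrm{R}_{n\wedge\tau_{c}}$ is bounded in $L^{2}(\nu)$, hence uniformly integrable. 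What is genuinely needed on top of this is a one-sided control of $\mathrm{R}_{n\wedge\tau_{c}}$ from \emph{below}, equivalently nondegeneracy of its limit: here one must invoke the predictable (Hellinger-process) criterion of Kabanov--Lip\-cer--Shiryaev, the delicate point being to bound the contribution of the events $\{\mathrm{Z}_{n}\ \text{large}\}$ --- on which $\mathbf{E}_{\mu}(\mathrm{Z}_{n}^{3/2}\mid\mathcal{A}_{n-1})$ is \emph{not} dominated by $1-\alpha_{n}$ --- by splitting the conditional Hellinger increments at a truncation level and summing them against $\sum_{n}(1-\alpha_{n})<\infty$. Granting that $\mathrm{R}_{\infty}>0$ $\nu$-a.s.\ on each $\{\tau_{c}=\infty\}$ and letting $c\downarrow0$ yields $\{V_{\infty}>0\}\subseteq\{\mathrm{z}_{\infty}<\infty\}$ mod $\nu$, finishing item (2). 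I expect this truncation estimate to be the real obstacle; everything else is soft martingale theory.
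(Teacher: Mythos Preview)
The paper does not prove this theorem at all: it is stated with attributions (Pfeiffer, Grenander for part~(1); Kabanov--Lipcer--Shiryaev for part~(2)) and then used as a black box. So there is no ``paper's proof'' to compare against; what follows is an assessment of your argument on its own merits.

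Your treatment of part~(1) via the dominating measure $m=\tfrac12(\nu+\mu)$ and L\'evy's upward theorem is correct and is one of the standard routes.

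For part~(2), the martingale $\mathrm{R}_{n}=\mathrm{z}_{n}^{-1/2}/V_{n}$ under $\nu$ is set up correctly, and from $\mathrm{z}_{\infty}^{-1/2}=\mathrm{R}_{\infty}V_{\infty}$ you do obtain the inclusion $\{\mathrm{z}_{\infty}<\infty\}\subseteq\{\sum_{n}(1-\alpha_{n})<\infty\}$ mod $\nu$. The reverse inclusion, however, has a genuine gap that you yourself flag. Uniform integrability of $\mathrm{R}_{n\wedge\tau_{c}}$ (your $L^{2}$-bound via the $\nu$-supermartingale $\mathrm{z}_{n}^{-1}$ is fine) guarantees $L^{1}$-convergence and preservation of expectation, but it does \emph{not} rule out $\mathrm{R}_{\infty}=0$ on a subset of $\{\tau_{c}=\infty\}$ of positive $\nu$-measure; a uniformly integrable nonnegative martingale with mean one can perfectly well vanish on a set of positive probability. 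Your proposed remedy --- ``invoke the predictable (Hellinger-process) criterion of Kabanov--Lipcer--Shiryaev'' --- is circular: that criterion \emph{is} part~(2) of the theorem. So the proposal as written does not close the hard direction.

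What the Kabanov--Lipcer--Shiryaev argument actually does for that direction is not soft martingale convergence but a quantitative estimate: one decomposes $\log \mathrm{z}_{n}$ (or, equivalently, works with truncated increments of $\sqrt{\mathrm{Z}_{n}}$) into a martingale part and a predictable drift, and controls the conditional variances and jump sizes by the Hellinger increments $1-\alpha_{n}$ via a truncation and a Lenglart/three-series type bound. That is exactly the ``truncation estimate'' you single out as the obstacle; it is the substance of the theorem, and it must be carried out rather than invoked.
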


Here and later we use $\mathbf{E}_{\mu}\left(\cdot\right)$ as a general notation for the expectation with respect to a measure $\mu$. By writing $A=B\mod\nu$ we mean that $\nu\left(A\triangle B\right)=0$.

The characterization in part (2) of Theorem \ref{Theorem: KLS} seems somehow mysterious. In the following we rephrase this criteria using the notion of the \textit{Kakutani-Hellinger distance}. Recall that for a pair of measures $\nu$ and $\mu$ on a measurable space $X$, their \textit{Kakutani-Hellinger distance} is defined by
$$\text{h}^2\left(\nu,\mu\right)=\left\Vert{\textstyle \sqrt{\frac{d\mu}{d\lambda}}-\sqrt{\frac{d\nu}{d\lambda}}}\right\Vert_{L^{2}\left(\lambda\right)}^{2}=\intop_{X}\left({\textstyle\sqrt{\frac{d\mu}{d\lambda}}-\sqrt{\frac{d\nu}{d\lambda}}}\right)^{2}d{\textstyle\lambda},$$
where $\lambda$ stands for any choice of a measure on $X$ for which both $\nu\ll\lambda$ and $\mu\ll\lambda$, for instance $\lambda=\left(\nu+\mu\right)/2$. It turns out that the value of $\text{h}^2\left(\nu,\mu\right)$ is independent of the choice of $\lambda$ and that $\left(\nu,\mu\right)\mapsto\text{h}\left(\nu,\mu\right)$ is a well-defined bounded metric of measures on $X$. See for instance \cite[Lemma 3.1]{shiryaev1995p}. If $\nu\ll\mu$ and $\mathrm{z}:=d\nu/d\mu$, the Kakutani-Hellinger distance takes the form
$$\mathrm{h}^{2}\left(\nu,\mu\right)=\mathbf{E}_{\mu}\left(1-\sqrt{\mathrm{z}}\right)^{2}=2\left(1-\mathbf{E}_{\mu}\left(\sqrt{\mathrm{z}}\right)\right).$$
For a detailed introduction of the Kakutani-Hellinger distance see \cite[$\mathsection$ 9]{shiryaev1995p}. Let us now 'localize' the Kakutani-Hellinger distance for measures on a filtered space.

\begin{dfn}[\textsc{Local Kakutani-Hellinger Distance}]
\label{Definition: local Kakutani-Hellinger distances}
Let $X$ be a space with filtration $\left(\mathcal{A}_n\right)_{n\geq0}$. For a measure $\nu$ on $X$ and $n\geq1$ denote by
$$\nu^{\left(n\right)}\left(E\mid x\right):=\mathbf{E}_{\nu_{n}}\left(\mathbf{1}_{E}\left(x\right)\mid\mathcal{A}_{n-1}\right),\quad E\in\mathcal{A}_{n},$$
the regular conditional probability, defined for $\nu$-a.e. $x\in X$ (cf. \cite[$\mathsection$ 7]{shiryaev1995p}).\\
For $\nu$ and $\mu$ measures on $X$ define the \emph{local Kakutani-Hellinger distances} by
$$\mathrm{h}_{n}^2\left(\nu,\mu\right)=\mathrm{h}^2\left(\nu^{\left(n\right)},\mu^{\left(n\right)}\right)\text{ for }n\geq 1.$$
For every $n\geq 1$ this is a $\mathcal{A}_{n-1}$-measurable random variables, as $\nu^{\left(n\right)}\left(\cdot\mid x\right)$ and $\mu^{\left(n\right)}\left(\cdot\mid x\right)$ are $\mathcal{A}_{n-1}$-measurable functions of $x$.
\end{dfn}

Observe that if $\nu\lequ\mu$, for every $n\geq 1$ we have that $\mathrm{z}_n:=d\nu_n/d\mu_n$ and $\mathrm{Z}_n:=\mathrm{z}_n\mathrm{z}_{n-1}^{-1}$ are strictly positive and finite, so that the following identity of $\mathcal{A}_{n-1}$-measurable functions holds:
$$\mathrm{Z}_{n}=\frac{d\nu_{n}}{d\mu_{n}}\cdot\frac{d\mu_{n-1}}{d\nu_{n-1}}=\frac{d\nu_{n}}{d\nu_{n-1}}\cdot\frac{d\mu_{n-1}}{d\mu_{n}}=\frac{d\nu^{\left(n\right)}}{d\mu^{\left(n\right)}}.$$
Thus, taking the measure $\lambda$ in the Kakutani-Hellinger distance to be simply $\mu$, we conclude
$$\mathrm{h}_{n}^{2}\left(\nu,\mu\right)=2\left(1-\mathbf{E}_{\mu}\left(\sqrt{\mathrm{Z}_{n}}\mid\mathcal{A}_{n-1}\right)\right)\text{ for all }n\geq1.$$
We then see that the criteria in Theorem \ref{Theorem: KLS} for absolute continuity of measures is a criteria on the divergence on the sum of the local Kakutani-Hellinger distances.

We now compute the Hahn-Lebesgue decomposition for Markov measures.

\begin{prop}
\label{Proposition: Local Kakutani-Hellinger}
Let $\nu$ and $\mu$ be Markov fields on $\mathcal{S}^{\mathbb{I}}$ specified by $\left(\pi_0,P_n\right)_{n\geq1}$ and $\left(\lambda_0,Q_n\right)_{n\geq1}$, respectively. Suppose that $\nu\lac\mu$ with respect to the natural filtration $\left(\mathcal{A}_n\right)_{n\geq0}$. Then for every $n\geq1$,
$$\mathrm{h}_{n}^{2}\left(\nu,\mu\right)=2\left(1-\mathbf{E}_{\mu}\left(\sqrt{\mathrm{Z}_{n}}\mid\mathcal{A}_{n-1}\right)\right)=\sum_{t\in\mathbb{S}}\left(\sqrt{P_{n}\left(\mathbb{X}_{n},t\right)}-\sqrt{Q_{n}\left(\mathbb{X}_{n},t\right)}\right)^{2}.$$
\end{prop}

\begin{proof}
The first equality is a general identity regarding the local Kakutani-Hellinger distances that we mentioned above. For the second equality, note that for every $n\geq1$ the local Radon-Nikodym derivative is
$$\mathrm{z}_{n}=\frac{d\nu_n}{d\mu_n}=\frac{\pi_0\left(X_{0}\right)\prod_{k=1}^{n-1}P_{k}\left(\mathbb{X}_{k},\mathbb{X}_{k+1}\right)}{\lambda_0\left(X_{0}\right)\prod_{k=1}^{n-1}Q_{k}\left(\mathbb{X}_{k},\mathbb{X}_{k+1}\right)},$$
so that
$$\mathrm{Z}_{n}=\mathrm{z}_{n}\mathrm{z}_{n-1}^{-1}=\frac{P_{n-1}\left(\mathbb{X}_{n-1},\mathbb{X}_{n}\right)}{Q_{n-1}\left(\mathbb{X}_{n-1},\mathbb{X}_{n}\right)}.$$
Then for every $n\geq1$,
\begin{align*}
\mathbf{E}_{\mu}\left(\sqrt{\mathrm{Z}_{n}}\mid\mathcal{A}_{n-1}\right)
&=\sum_{t\in\mathbb{S}}\sqrt{\frac{P_{n-1}\left(\mathbb{X}_{n-1},t\right)}{Q_{n-1}\left(\mathbb{X}_{n-1},t\right)}}Q_{n-1}\left(\mathbb{X}_{n-1},t\right)\\
&=\sum_{t\in\mathbb{S}}\sqrt{P_{n-1}\left(\mathbb{X}_{n-1},t\right)Q_{n-1}\left(\mathbb{X}_{n-1},t\right)}\\
&=1-\frac{1}{2}\sum_{t\in\mathbb{S}}\left(\sqrt{P_{n-1}\left(\mathbb{X}_{n-1},t\right)}-\sqrt{Q_{n-1}\left(\mathbb{X}_{n-1},t\right)}\right)^{2}.
\end{align*}
Rearranging terms we obtain the second equality.
\end{proof}

\section{Proofs}
\label{Section: Proofs}

We start to prove Theorem \ref{Theorem A}. Let $\nu$ and $\mu$ be Markov fields on $\mathcal{S}^{\mathbb{I}}$ specified by $\left(\pi_0,P_n\right)_{n\geq1}$ and $\left(\lambda_0,Q_n\right)_{n\geq1}$, respectively. Recall the number $\mathrm{D}_{n}^{2}\left(\nu,\mu\right)$ defined in \ref{eq:D_n}, and the local Kakutani-Hellinger distance $\mathrm{h}_{n}^{2}\left(\nu,\mu\right)$ as in Definition \ref{Definition: local Kakutani-Hellinger distances}. Using the formula computed in Proposition \ref{Proposition: Local Kakutani-Hellinger}, we see that
$$\mathrm{D}_{n}^{2}\left(\nu,\mu\right)\geq\mathrm{h}_{n}^{2}\left(\nu,\mu\right)\text{ for every }n\geq1.$$
Suppose now further that $\nu\in\mathscr{R}$. Since for every $n\geq 1$ we have
$$\frac{\mathbf{E}_{\nu}\left(\mathrm{h}_{n}^{2}\left(\nu,\mu\right)\right)}{\mathrm{D}_{n}^{2}\left(\nu,\mu\right)}=\frac{\sum_{s\in\mathcal{S}}\left(\sum_{t\in\mathcal{S}}\left(\sqrt{P_{n}\left(s,t\right)}-\sqrt{Q_{n}\left(s,t\right)}\right)^{2}\right)\pi_{n}\left(s\right)}{\sum_{s,t\in\mathcal{S}}\left(\sqrt{P_{n}\left(s,t\right)}-\sqrt{Q_{n}\left(s,t\right)}\right)^{2}},$$
it follows that
$$\liminf_{n\to\infty}\frac{\mathbf{E}_{\nu}\left(\mathrm{h}_{n}^{2}\left(\nu,\mu\right)\right)}{\mathrm{D}_{n}^{2}\left(\nu,\mu\right)}\geq\liminf_{n\to\infty}\inf_{s\in\mathcal{S}}\pi_{n}\left(s\right)>0.$$
Thus, by Theorem \ref{Theorem: KLS} and Proposition \ref{Proposition: Local Kakutani-Hellinger}, we obtain one of the implications in Theorem \ref{Theorem A}:

\begin{cor}
\label{Corollary: Half Theorem A}
If $\nu\in\mathscr{R}$ and $\nu\lac\mu$ then
$$\sum_{n\geq1}\mathbf{E}_{\nu}\left(\mathrm{h}_{n}^{2}\left(\nu,\mu\right)\right)<\infty\iff\sum_{n\geq1}\mathrm{D}_{n}^{2}\left(\nu,\mu\right)<\infty\implies\nu\ll\mu.$$
\end{cor}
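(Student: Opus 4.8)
The plan is to combine the formula for the local Hellinger distances from Proposition \ref{Proposition: Local Hellinger} with the Hahn--Lebesgue description from Theorem \ref{Theorem: KLS}, the class $\mathscr{R}$ entering only through the comparison between the $\nu$-averaged local Hellinger distances and the deterministic numbers $\mathrm{D}_n^2(\nu,\mu)$. Throughout, $\nu$ and $\mu$ are the Markov fields specified by $(\pi_0,P_n)_{n\ge1}$ and $(\lambda_0,Q_n)_{n\ge1}$, and the hypothesis $\nu\lac\mu$ is what makes the local densities $\mathrm{z}_n$ and the identities of Proposition \ref{Proposition: Local Hellinger} available.

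First I would prove the equivalence of the two series. By Proposition \ref{Proposition: Local Hellinger}, $\mathrm{h}_n^2(\nu,\mu)$ is the squared Hellinger distance between the transition laws $P_n(\mathbb{X}_n,\cdot)$ and $Q_n(\mathbb{X}_n,\cdot)$ issued from the current state, so integrating against $\nu$ and grouping by the value of that state gives
\[
\mathbf{E}_\nu\!\left(\mathrm{h}_n^2(\nu,\mu)\right)=\sum_{s\in\mathbb{S}}\nu\!\left(\mathbb{X}_n=s\right)\sum_{t\in\mathbb{S}}\left(\sqrt{P_n(s,t)}-\sqrt{Q_n(s,t)}\right)^2 ,
\]
whose right-hand side is at most $\mathrm{D}_n^2(\nu,\mu)$ because each $\nu(\mathbb{X}_n=s)\le1$ and each inner sum is non-negative; hence $\sum_n\mathrm{D}_n^2<\infty$ implies $\sum_n\mathbf{E}_\nu(\mathrm{h}_n^2)<\infty$. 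For the reverse I invoke $\nu\in\mathscr{R}$: since $\mathbb{S}$ is finite, $2c:=\min_{s\in\mathbb{S}}\liminf_n\nu(\mathbb{X}_n=s)>0$, so there is $N$ with $\nu(\mathbb{X}_n=s)\ge c$ for every $n\ge N$ and every $s\in\mathbb{S}$, which yields $\mathbf{E}_\nu(\mathrm{h}_n^2(\nu,\mu))\ge c\,\mathrm{D}_n^2(\nu,\mu)$ for all $n\ge N$. As the finitely many terms with $n<N$ are finite (again because $\mathbb{S}$ is finite), $\sum_n\mathbf{E}_\nu(\mathrm{h}_n^2)<\infty$ implies $\sum_n\mathrm{D}_n^2<\infty$; this is precisely the bound $\liminf_n\mathbf{E}_\nu(\mathrm{h}_n^2)/\mathrm{D}_n^2>0$ mentioned above.

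Next I would derive $\nu\ll\mu$ from $\sum_n\mathbf{E}_\nu(\mathrm{h}_n^2(\nu,\mu))<\infty$. By Tonelli, $\mathbf{E}_\nu\!\big(\sum_{n\ge1}\mathrm{h}_n^2(\nu,\mu)\big)=\sum_{n\ge1}\mathbf{E}_\nu(\mathrm{h}_n^2(\nu,\mu))<\infty$, so $\sum_{n\ge1}\mathrm{h}_n^2(\nu,\mu)<\infty$ holds $\nu$-a.e. Since Proposition \ref{Proposition: Local Hellinger} gives $\tfrac{1}{2}\mathrm{h}_n^2(\nu,\mu)=1-\mathbf{E}_\mu(\sqrt{\mathrm{Z}_n}\mid\mathcal{A}_{n-1})$, a non-negative quantity, the event $\{\sum_n\mathrm{h}_n^2(\nu,\mu)<\infty\}$ coincides with $\{\sum_n(1-\mathbf{E}_\mu(\sqrt{\mathrm{Z}_n}\mid\mathcal{A}_{n-1}))<\infty\}$, which by Theorem \ref{Theorem: KLS}(2) equals $\{\mathrm{z}_\infty<\infty\}$ up to a $\nu$-null set. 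Therefore $\nu(\mathrm{z}_\infty=\infty)=0$, and Theorem \ref{Theorem: KLS}(1) gives, for every Borel $E$, $\nu(E)=\nu(E\cap\{\mathrm{z}_\infty=\infty\})+\int_E\mathrm{z}_\infty\,d\mu=\int_E\mathrm{z}_\infty\,d\mu$; in particular $\mu(E)=0$ forces $\nu(E)=0$, i.e. $\nu\ll\mu$. Combined with the previous step this closes the chain $\sum_n\mathbf{E}_\nu(\mathrm{h}_n^2)<\infty\iff\sum_n\mathrm{D}_n^2<\infty\implies\nu\ll\mu$.

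The only step that requires care is the lower comparison $\mathbf{E}_\nu(\mathrm{h}_n^2(\nu,\mu))\ge c\,\mathrm{D}_n^2(\nu,\mu)$: one must upgrade the pointwise-in-$s$ positivity $\liminf_n\nu(\mathbb{X}_n=s)>0$ to a bound uniform in $s$ and in all large $n$ — which is exactly where finiteness of $\mathbb{S}$ is used — and one must quarantine the finitely many initial indices, where a marginal $\nu(\mathbb{X}_n=s)$ may vanish, so that they do not affect the convergence statements. Everything else is a direct reading of Proposition \ref{Proposition: Local Hellinger} and Theorem \ref{Theorem: KLS}.
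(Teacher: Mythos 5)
Your proposal is correct and follows essentially the same route as the paper: the paper's (very terse) argument likewise rests on the pointwise bound $\mathrm{h}_n^2(\nu,\mu)\le\mathrm{D}_n^2(\nu,\mu)$, the observation that $\nu\in\mathscr{R}$ together with finiteness of $\mathbb{S}$ forces $\liminf_{n}\mathbf{E}_\nu(\mathrm{h}_n^2)/\mathrm{D}_n^2>0$, and then Theorem \ref{Theorem: KLS} with Proposition \ref{Proposition: Local Hellinger} to conclude $\nu\ll\mu$. You have simply supplied the details the paper leaves as ``one can easily see,'' and your handling of the finitely many initial indices and the Tonelli step is exactly what is needed.
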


For the other implication of Theorem \ref{Theorem A} we use the following probabilistic lemma that was proved in \cite[Appendix B]{avraham2022absolutely}. We also prove it here for completeness.

\begin{lem}
\label{Lemma: Fatou}
Let $I$ be a countable index set, $\left(a_{i}\right)_{i\in I}$ a sequence of non-negative numbers with $\sum_{i\in I}a_{i}=\infty$ and $\left(A_{i}\right)_{i\in I}$ a sequence of events in a probability space. Then
$$\mathbb{P}\left(\sum_{i\in I}a_{i}\mathbf{1}_{A_{i}}=\infty\right)\geq\liminf_{i\in I}\mathbb{P}\left(A_{i}\right).$$
\end{lem}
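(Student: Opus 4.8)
The plan is to reduce the statement to a reverse-Fatou (Fatou for limsup of events) argument combined with a second-moment estimate, exactly the kind of "probabilistic pigeonhole" the introduction advertises. Write $S = \sum_{i\in I} a_i \mathbf{1}_{A_i}$. Since $I$ is countable we may enumerate it as $I = \{i_1, i_2, \dots\}$ and work with finite partial sums $S_N = \sum_{k=1}^{N} a_{i_k}\mathbf{1}_{A_{i_k}}$, so that $S_N \uparrow S$. The event $\{S = \infty\}$ contains $\limsup_k A_{i_k}$ only in trivial cases, so a direct Borel--Cantelli is not available; instead I would exploit that $\{S=\infty\}$ is a tail-type event in a weak sense and estimate $\mathbb{P}(S < \infty)$ from above.

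First I would set $c := \liminf_{i\in I}\mathbb{P}(A_i)$; if $c = 0$ there is nothing to prove, so assume $c>0$ and fix $\varepsilon\in(0,c)$. By definition of $\liminf$ there is a finite set $F\subset I$ such that $\mathbb{P}(A_i) \geq c - \varepsilon$ for all $i \in I\setminus F$. Discarding the finitely many terms in $F$ changes neither the divergence of $\sum a_i$ nor the finiteness of $S$, so without loss of generality $\mathbb{P}(A_i)\geq c-\varepsilon$ for every $i\in I$. Now for any finite $J\subset I$, $\mathbf{E}\big[\sum_{i\in J} a_i \mathbf{1}_{A_i}\big] = \sum_{i\in J} a_i \mathbb{P}(A_i) \geq (c-\varepsilon)\sum_{i\in J} a_i$, which tends to $\infty$ as $J$ exhausts $I$. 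The core step is to upgrade this divergence of expectations to the statement $\mathbb{P}(S = \infty) \geq c - \varepsilon$. I would do this via a truncation/Fatou argument: for a threshold $L>0$, consider the bounded variables $\min(S_N, L)$; by bounded convergence $\mathbf{E}[\min(S,L)] = \lim_N \mathbf{E}[\min(S_N,L)]$, and one estimates $\mathbf{E}[\min(S_N,L)] \geq \mathbf{E}[S_N] - \mathbf{E}[(S_N - L)^+]$. The trouble is controlling $\mathbf{E}[(S_N-L)^+]$ without a variance bound, since the $A_i$ are arbitrary and possibly highly correlated.

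The cleanest route avoids second moments altogether. Observe $\mathbf{E}[\min(S,L)] \leq L\,\mathbb{P}(S>0) $ is too weak; instead use $\min(S,L) \leq L\mathbf{1}_{\{S=\infty\}} + \min(S,L)\mathbf{1}_{\{S<\infty\}}$ and let $L\to\infty$. On $\{S<\infty\}$ we have $\min(S,L)\uparrow S < \infty$, so by monotone convergence $\mathbf{E}[\min(S,L)\mathbf{1}_{\{S<\infty\}}] \to \mathbf{E}[S\mathbf{1}_{\{S<\infty\}}]$. Meanwhile $\mathbf{E}[\min(S,L)] \geq \mathbf{E}[\min(S_N,L)] \to \mathbf{E}[S_N]$ as $L\to\infty$ for each fixed $N$ (again bounded convergence, $S_N$ integrable), so $\limsup_{L\to\infty}\mathbf{E}[\min(S,L)] \geq \sup_N \mathbf{E}[S_N] = +\infty$ using the divergence established above. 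Combining, $L\,\mathbb{P}(S=\infty) \geq \mathbf{E}[\min(S,L)] - \mathbf{E}[\min(S,L)\mathbf{1}_{\{S<\infty\}}]$; dividing by $L$ and being careful with the order of limits, one extracts $\mathbb{P}(S=\infty) \geq c-\varepsilon$. Letting $\varepsilon\downarrow 0$ gives the claim.

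The main obstacle I anticipate is bookkeeping the two limits ($N\to\infty$ and $L\to\infty$) in the right order so that the divergence of $\mathbf{E}[S_N]$ genuinely forces mass onto $\{S=\infty\}$ rather than leaking into large-but-finite values of $S$; the resolution is that on $\{S<\infty\}$ the truncated expectations stay bounded by the (possibly infinite but $L$-independent) quantity $\mathbf{E}[S\mathbf{1}_{\{S<\infty\}}]$ — and if that is itself infinite, then a fortiori one argues directly that $S=\infty$ with probability at least $c$ by a simpler monotone-convergence comparison on the event $\{S<\infty\}$ of finite partial sums. Either way no independence or variance hypothesis on the $A_i$ is needed, which is exactly why this lemma is robust enough to be applied later with the Markovian dependence of the $A_i$.
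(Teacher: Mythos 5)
Your reduction to a cofinite index set with $\mathbb{P}(A_i)\ge c-\varepsilon$ is fine, and you are right that no independence or variance hypothesis is needed. But the core step --- ``upgrade the divergence of $\mathbf{E}[S_N]$ to $\mathbb{P}(S=\infty)\ge c-\varepsilon$'' --- is not actually carried out, and the truncation manipulations you propose cannot carry it out. Your final inequality
$$L\,\mathbb{P}\left(S=\infty\right)\;\ge\;\mathbf{E}\left[\min\left(S,L\right)\right]-\mathbf{E}\left[\min\left(S,L\right)\mathbf{1}_{\left\{S<\infty\right\}}\right]$$
is an identity (both sides equal $\mathbf{E}[\min(S,L)\mathbf{1}_{\{S=\infty\}}]=L\,\mathbb{P}(S=\infty)$), so dividing by $L$ returns $\mathbb{P}(S=\infty)\ge\mathbb{P}(S=\infty)$ and extracts nothing. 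To get a genuine lower bound you would need $\mathbf{E}[\min(S,L)]\ge(c-\varepsilon)L-O(1)$, i.e.\ a lower bound on $\mathbb{P}(S_N\ge L)$, and first moments alone never give such a bound (Markov's inequality goes the wrong way; a Paley--Zygmund route would need second moments, which you correctly decided to avoid). Your fallback for the case $\mathbf{E}[S\mathbf{1}_{\{S<\infty\}}]=\infty$ is not an argument: infinite expectation of $S$ on $\{S<\infty\}$ is perfectly consistent, in general, with $\mathbb{P}(S=\infty)=0$, so ``a simpler monotone-convergence comparison'' cannot conclude anything there. The deeper problem is that after the reduction your argument uses the hypothesis $\mathbb{P}(A_i)\ge c-\varepsilon$ only through $\mathbf{E}[S_N]\ge(c-\varepsilon)\sum a_i$; any proof routed purely through that expectation bound is doomed, because the conclusion genuinely depends on an overlap property of the events, not on the size of $\mathbf{E}[S]$.

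The missing idea --- and the paper's actual pigeonhole --- is to localize to the bad event. Set $F_C:=\{\sum_i a_i\mathbf{1}_{A_i}\le C\}$ and suppose $\mathbb{P}(F_C)>1-q$ for some $C$, where $q<c$. Since each $A_i$ has probability at least $q$, it must intersect $F_C$ in measure at least $\mathbb{P}(F_C)-(1-q)>0$; hence
$$\mathbf{E}\left[\mathbf{1}_{F_C}\sum_i a_i\mathbf{1}_{A_i}\right]=\sum_i a_i\,\mathbb{P}\left(F_C\cap A_i\right)\ge\left(\mathbb{P}\left(F_C\right)-\left(1-q\right)\right)\sum_i a_i=\infty,$$
contradicting the fact that the integrand is at most $C$ on $F_C$. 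Therefore $\mathbb{P}(F_C)\le 1-q$ for every $C$, and letting $C\to\infty$ gives $\mathbb{P}(S=\infty)\ge q$. This is the step your proposal lacks: the first moment must be computed \emph{restricted to} $\{S\le C\}$, where the uniform lower bound on the $\mathbb{P}(A_i)$ forces each term to retain a fixed positive mass, rather than on the whole space.
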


\begin{rem}[The Reverse Fatou's Lemma]
Taking $a_i=1$ for all $i\in I$, one concludes that for every $J$ an infinite subset of $I$,
$$\mathbb{P}\left(\sum_{i\in I}\mathbf{1}_{A_{i}}=\infty\right)\geq\mathbb{P}\left(\sum_{j\in J}\mathbf{1}_{A_{j}}=\infty\right)\geq\liminf_{j\in J}\mathbb{P}\left(A_{j}\right).$$
As $J$ is arbitrary this implies that $\mathbb{P}\left(\limsup_{i\in I}A_{i}\right)\geq\limsup_{i\in I}\mathbb{P}\left(A_{i}\right)$. This inequality is usually referred to as the {\it reverse Fatou's lemma}.
\end{rem}

\begin{proof}[Proof of Lemma \ref{Lemma: Fatou}]
Denote $p:=\liminf_{i\in I}\mathbb{P}\left(A_{i}\right)$. Excluding trivialities we assume $p>0$. For an arbitrary $0<q<p$ let $I_q\subset I$ be a co-finite set such that $\mathbb{P}\left(A_{i}\right)\geq q$ for every $i\in I_q$. For every $C>0$ let $F_{C}:=\left\{ \sum_{i\in I_q}a_{i}\mathbf{1}_{A_{i}}\leq C\right\}$. Suppose toward a contradiction that $\mathbb{P}\left(F_{C}\right)>1-q$ for some $C>0$. Then for every $i\in I_q$, since $\mathbb{P}\left(A_{i}^{\mathsf{c}}\right)\leq1-q$ we get
$$\mathbb{P}\left(F_{C}\right)-\mathbb{P}\left(A_{i}^{\mathsf{c}}\right)\geq\mathbb{P}\left(F_{C}\right)-\left(1-q\right)>0.$$
Thus,
\begin{align*}
\begin{split}
\infty
&=\sum_{i\in I_q}a_{i}\left(\mathbb{P}\left(F_{C}\right)-\left(1-q\right)\right)\leq\sum_{i\in I_q}a_{i}\left(\mathbb{P}\left(F_{C}\right)-\mathbb{P}\left(A_{i}^{\mathsf{c}}\right)\right)\\
&\leq\sum_{i\in I_q}a_{i}\mathbb{P}\left(F_{C}\cap A_{i}\right)=\sum_{i\in I_q}a_{i}\mathbf{E}\left({\mathbf{1}_{F_{C}}\mathbf{1}_{A_{i}}}\right)\\
&=\mathbf{E}\left(\mathbf{1}_{F_{C}}\sum_{i\in I_q}a_{i}\mathbf{1}_{A_{i}}\right)\leq C\mathbb{P}\left(F_{C}\right),
 \end{split}
\end{align*}
where we use that for general events $E$ and $F$ we have $\mathbb{P}\left(E\cap F\right)\geq\mathbb{P}\left(E\right)-\mathbb{P}\left(F^{\mathsf{c}}\right)$, monotone convergence, and the definition of $F_C$. This is a contradiction, so we conclude that $\mathbb{P}\left(F_{C}\right)\leq1-q$ for all $C\geq 0$, hence
$$\mathbb{P}\left(\sum_{i\in I}a_{i}\mathbf{1}_{A_{i}}=\infty\right)=\mathbb{P}\left(\sum_{i\in I_q}a_{i}\mathbf{1}_{A_{i}}=\infty\right)=\lim_{C\to\infty}\mathbb{P}\left(F_{C}^{\mathsf{c}}\right)\geq q.$$
Since $0<q<p$ is arbitrary the proof is complete.
\end{proof}

\begin{proof}[Proof of Theorem \ref{Theorem A}]
One of the implication of Theorem \ref{Theorem A} was proved in Corollary \ref{Corollary: Half Theorem A}. We show the other implication. If $\nu\sim\mu$ then by Theorem \ref{Theorem: KLS} and Proposition \ref{Proposition: Local Kakutani-Hellinger} we have
\begin{align*}
0
&=\nu\left(\sum_{n\geq1}\mathrm{h}_{n}^{2}\left(\nu,\mu\right)=\infty\right)\\
&=\nu\left(\sum_{n\geq1}\sum_{s,t\in\mathbb{S}}\left(\sqrt{P_{n-1}\left(s,t\right)}-\sqrt{Q_{n-1}\left(s,t\right)}\right)^{2}\mathbf{1}_{\left\{ \mathbb{X}_{n-1}=s\right\} }=\infty\right).
\end{align*}
For $s\in\mathbb{S}$ and $n\geq 1$ set
$$A_{n}\left(s\right)=\left\{\mathbb{X}_{n-1}=s\right\}$$
and
$$a_{n}\left(s\right)=\sum_{t\in\mathbb{S}}\left(\sqrt{P_{n-1}\left(s,t\right)}-\sqrt{Q_{n-1}\left(s,t\right)}\right)^{2}.$$
Assume by contradiction that $\sum_{n\geq1}a_{n}\left(s\right)=\infty$ for some $s\in\mathbb{S}$. By Lemma \ref{Lemma: Fatou},
$$0=\nu\left(\sum_{n\geq1}a_{n}\left(s\right)\mathbf{1}_{A_{n}\left(s\right)}=\infty\right)\geq\liminf_{n\to\infty}\nu\left(A_{n}\left(s\right)\right).$$
This contradicts the assumption $\nu\in\mathscr{R}$, hence $\sum_{n\geq1}a_{n}\left(s\right)<\infty$. This holds for all $s\in\mathbb{S}$ so that
$$\sum_{n\geq1}\mathrm{D}_{n}^{2}\left(\nu,\mu\right)=\sum_{n\geq1}\sum_{s\in\mathcal{S}}a_{n}\left(s\right)<\infty.\qedhere$$
\end{proof}

To prove Theorem \ref{Theorem B} we strengthen Lemma \ref{Lemma: Fatou} for Markov fields in $\mathscr{S}$.

\begin{lem}
\label{Lemma: Fatou Markov}
Let $\nu\in\mathscr{S}$ be a Markov field on $\mathcal{S}^{\mathbb{I}}$ and $\left(a_{n}\right)_{n\geq1}$ be a sequence of non-negative numbers for which $\sum_{n\geq1}a_{n}=\infty$. Then for every $s\in\mathbb{S}$,
$$\nu\left(\sum_{n\geq1}a_{n}\mathbf{1}_{\left\{\mathbb{X}_n=s\right\}}=\infty\right)=1.$$
\end{lem}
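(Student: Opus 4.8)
The plan is to upgrade Lemma \ref{Lemma: Fatou} — which only gives a lower bound $\liminf_n \nu(\mathbb{X}_n = s)$ on the probability that $\sum_n a_n \mathbf{1}_{\{\mathbb{X}_n=s\}}=\infty$ — to a full-measure statement, by exploiting the stronger hypothesis $\nu\in\mathscr{S}$ together with the Markov property. The key idea is that $\mathscr{S}$ gives us two-point lower bounds: $\liminf_{f(n,m)\to\infty}\nu(\mathbb{X}_n=s,\mathbb{X}_m=s)>0$, which is precisely the kind of control needed to run a conditional version of the reverse Fatou argument. First I would reduce to showing that the event $E_s := \{\sum_{n\geq 1} a_n \mathbf{1}_{\{\mathbb{X}_n=s\}} = \infty\}$ has $\nu(E_s)=1$, and observe that $E_s$ is, modulo the contributions of finitely many coordinates, a tail-type event built from the sequence $(\mathbb{X}_n)_{n}$; in particular, for any fixed $N$, whether the series diverges does not depend on $\mathbb{X}_0,\dots,\mathbb{X}_N$ since removing finitely many terms from a divergent nonnegative series leaves it divergent.

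Next I would condition. Fix $N\geq 1$ and a value $u\in\mathbb{S}$ with $\nu(\mathbb{X}_N=u)>0$. Under the conditional measure $\nu(\,\cdot\mid \mathbb{X}_N=u)$, the process $(\mathbb{X}_n)_{n\geq N}$ is still Markov, and for $n$ large one can bound the conditional probability $\nu(\mathbb{X}_n=s\mid \mathbb{X}_N=u)$ from below using the $\mathscr{S}$ hypothesis applied to the pair $(N,n)$ (or rather to $(\mathbb{X}_N,\mathbb{X}_n)$): indeed $\nu(\mathbb{X}_N=u,\mathbb{X}_n=s)$ stays bounded below once $f(N,n)$ is large, and dividing by $\nu(\mathbb{X}_N=u)$ gives $\liminf_{n\to\infty}\nu(\mathbb{X}_n=s\mid \mathbb{X}_N=u) \geq c_u > 0$. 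Applying Lemma \ref{Lemma: Fatou} to the conditional probability space $(X,\nu(\,\cdot\mid\mathbb{X}_N=u))$ with the events $A_n=\{\mathbb{X}_n=s\}$ (for $n>N$) and weights $a_n$ yields
$$\nu\!\left(\sum_{n> N} a_n\mathbf{1}_{\{\mathbb{X}_n=s\}}=\infty \;\middle|\; \mathbb{X}_N=u\right)\;\geq\; c_u \;>\;0.$$
Since the event on the left equals $E_s$ modulo finitely many coordinates, $\nu(E_s\mid\mathbb{X}_N=u)\geq c_u>0$ for every admissible $u$, hence $\nu(E_s\mid\mathcal{A}_N)>0$ a.s.; as $N$ is arbitrary this means $\nu(E_s\mid\mathcal{A}_N)\to\mathbf{1}_{E_s}$ by the martingale convergence theorem, and a conditional-expectation/Lévy zero-one style argument forces $\mathbf{1}_{E_s}\geq c>0$ a.e., i.e.\ $\nu(E_s)=1$.

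The main obstacle I anticipate is making the last step rigorous: the event $E_s$ is not literally tail-measurable with respect to the natural filtration (it depends on all the $\mathbb{X}_n$, and for $\mathbb{I}=\mathbb{Z}$ the tail $\sigma$-algebra is subtle), so I cannot simply invoke a 0-1 law — which is the whole point of the paper. The clean way around this is to note that $E_s$ is invariant under changing finitely many coordinates, so $\mathbf{1}_{E_s}$ is measurable with respect to $\bigcap_N \sigma(\mathbb{X}_n : n\geq N)$, and then to argue directly: since $\nu(E_s\mid\sigma(\mathbb{X}_n:n\geq N)) \geq c$ uniformly in $N$ (by the conditional Fatou bound above, now conditioning on the whole future past $N$ rather than just $\mathbb{X}_N$, using the Markov property so that the future still has the $\mathscr{S}$-type lower bounds), taking $N\to\infty$ and using the backward martingale convergence theorem gives $\mathbf{1}_{E_s}=\nu(E_s\mid \bigcap_N\sigma(\mathbb{X}_n:n\geq N))\geq c$ a.e., whence $\nu(E_s)=1$. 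One should double-check that conditioning on $\sigma(\mathbb{X}_n:n\geq N)$ rather than on a single $\mathbb{X}_N$ does not destroy the lower bound on $\nu(\mathbb{X}_n=s\mid \cdots)$ — but by the Markov property this conditional probability depends only on $\mathbb{X}_N$, so it reduces to the one-point estimate already in hand, and the $\mathscr{S}$ hypothesis delivers exactly that.
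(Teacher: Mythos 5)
Your core argument (the second paragraph) is correct and takes a genuinely different route from the paper. The paper proves the lemma by an exhaustion argument: it first shows (Lemma \ref{Lemma: Hereditary}) that the conditional measure $\nu_E$ belongs to $\mathscr{S}\subset\mathscr{R}$ for \emph{every} set $E$ of positive measure --- for cylinder sets via the Markov field property and the two-point bounds of $\mathscr{S}$, then for general $E$ by approximation --- and then applies Lemma \ref{Lemma: Fatou} under $\nu_{E_s^{\mathsf{c}}}$, obtaining $\nu_{E_s^{\mathsf{c}}}\left(E_s\right)>0$, which contradicts $\nu_{E_s^{\mathsf{c}}}\left(E_s\right)=0$ unless $\nu\left(E_s^{\mathsf{c}}\right)=0$. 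You instead condition on the finite-dimensional $\sigma$-algebras $\mathcal{A}_N$, reduce to conditioning on $\mathbb{X}_N$ via the Markov field property, apply Lemma \ref{Lemma: Fatou} under $\nu\left(\cdot\mid\mathbb{X}_N=u\right)$, and let $N\to\infty$. Your route avoids the approximation step for general sets; the paper's route avoids martingale convergence here and produces the hereditary property $\nu_E\in\mathscr{S}$, which it reuses in the proof of Theorem \ref{Theorem C}.

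However, your ``clean way around'' in the last paragraph is circular. The event $E_s$ equals $\left\{\sum_{n>N}a_n\mathbf{1}_{\left\{\mathbb{X}_n=s\right\}}=\infty\right\}$ and hence lies in $\sigma\left(\mathbb{X}_n:n\geq N\right)$ for every $N$, so $\nu\left(E_s\mid\sigma\left(\mathbb{X}_n:n\geq N\right)\right)=\mathbf{1}_{E_s}$ identically; asserting that this is bounded below by a positive constant is precisely the statement $\nu\left(E_s\right)=1$ you are trying to prove, and the conditional Fatou bound gives nothing because the events $\left\{\mathbb{X}_n=s\right\}$ are themselves measurable with respect to the conditioning $\sigma$-algebra. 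Discard that paragraph and keep the forward version: $\mathcal{A}_N$ increases to the Borel $\sigma$-algebra, so $\nu\left(E_s\mid\mathcal{A}_N\right)\to\mathbf{1}_{E_s}$ $\nu$-a.s.\ by (L\'evy's) upward martingale convergence, and a bound $\nu\left(E_s\mid\mathcal{A}_N\right)\geq c>0$ valid for all large $N$ with $c$ independent of $N$ forces $\mathbf{1}_{E_s}\geq c$ a.e., i.e.\ $\nu\left(E_s\right)=1$. This uses no tail 0-1 law, so it does not defeat the purpose of the paper.

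You must then justify that the constant is uniform, and here your intermediate claim is imprecise: for a \emph{fixed} $N$ the hypothesis $\nu\in\mathscr{S}$ gives nothing, since $f\left(n,N\right)=\min\left\{\left|n\right|,\left|N\right|,\left|n-N\right|\right\}\leq N$ for all $n$, so $f\left(n,N\right)$ never tends to infinity and the asserted bound $\liminf_{n\to\infty}\nu\left(\mathbb{X}_n=s\mid\mathbb{X}_N=u\right)\geq c_u>0$ is not justified for small $N$. This is harmless for the corrected argument --- you only need the bound along large $N$ --- but it should be restated as: denoting by $p>0$ the infimum over $s,t\in\mathbb{S}$ of the liminf in the definition of $\mathscr{S}$, for every $\epsilon>0$ there is $K$ such that for all $N\geq K$, all $u\in\mathbb{S}$ with $\nu\left(\mathbb{X}_N=u\right)>0$ and all $n\geq 2N$ one has $\nu\left(\mathbb{X}_n=s\mid\mathbb{X}_N=u\right)\geq\nu\left(\mathbb{X}_n=s,\mathbb{X}_N=u\right)\geq p-\epsilon$, whence $\nu\left(E_s\mid\mathcal{A}_N\right)\geq p-\epsilon$ uniformly for $N\geq K$. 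With these two corrections the proof is complete.
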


\begin{proof}[Proof of Theorem \ref{Theorem B} Assuming Lemma \ref{Lemma: Fatou Markov}]
Note that by Theorem \ref{Theorem A}, in order to establish Theorem \ref{Theorem B} to get a dichotomy it is enough to show that
$$\nu\not\sim\mu\Longrightarrow\nu\left(\sum_{n\geq1}\text{h}_{n}^{2}\left(\nu,\mu\right)=\infty\right)=1,$$
as the right-hand side is equivalent to $\nu\perp\mu$ by Theorem \ref{Theorem: KLS} and Proposition \ref{Proposition: Local Kakutani-Hellinger}. Assume that $\nu\not\sim\mu$. By Theorem \ref{Theorem A} it follows that $\sum_{n\geq1}\text{D}_{n}^{2}\left(\nu,\mu\right)=\infty$ so there exists $s\in\mathbb{S}$ for which $\sum_{n\geq1}a_{n}\left(s\right)=\infty$, where
$$a_{n}\left(s\right)=\sum_{t\in\mathbb{S}}\left(\sqrt{P_{n-1}\left(s,t\right)}-\sqrt{Q_{n-1}\left(s,t\right)}\right)^{2}\text{ for }n\geq1.$$
By Lemma \ref{Lemma: Fatou Markov} we conclude that
$$\nu\left(\sum_{n\geq1}\mathrm{h}_{n}^{2}\left(\nu,\mu\right)=\infty\right)\geq\nu\left(\sum_{n\geq1}a_{n}\left(s\right)\mathbf{1}_{\left\{\mathbb{X}_{n-1}=s\right\}}=\infty\right)=1.\qedhere$$
\end{proof}

Our strategy to prove Lemma \ref{Lemma: Fatou Markov} is as follows. For a measure $\nu$ on $\mathcal{S}^{\mathbb{I}}$ and a set $E$ with $\nu\left(E\right)>0$, let us denote by $\nu_E$ the conditional measure $\nu\left(\cdot\mid E\right)$. For every choice of $s\in\mathbb{S}$ and $A_n=A_n\left(s\right)=\left\{ \mathbb{X}_{n-1}=s\right\}$, let
$$E_\infty:=\left\{ \sum_{n\geq1}a_{n}\mathbf{1}_{A_{n}}=\infty\right\}.$$
Recall that Lemma \ref{Lemma: Fatou} dealt with an arbitrary measure, so we have that
$$\nu_E\left(E_\infty\right)\geq\liminf_{n\to\infty}\nu_{E}\left(A_n\right)\text{ for every set }E\text{ with }\nu\left(E\right)>0.$$
If we show that for every $E$ with $\nu\left(E\right)>0$ this limit-infimum is positive, that is $\nu_E\in\mathscr{R}$, by considering $E=E_{\infty}^{\mathsf{c}}$ it will simply follow that $\nu\left(E_\infty^{\mathsf{c}}\right)=0$ so $\nu\left(E_\infty\right)=1$. As we are about to show, this property indeed holds for Markov fields in $\mathscr{S}$, hence Lemma \ref{Lemma: Fatou Markov} is a consequence of the following Lemma \ref{Lemma: Hereditary}.

\begin{lem}
\label{Lemma: Hereditary}
Let $\nu\in\mathscr{S}$ be a Markov field on $\mathcal{S}^{\mathbb{I}}$. Then for every set $E$ with $\nu\left(E\right)>0$ it holds that $\nu_{E}\in\mathscr{S}\subset\mathscr{R}$.
\end{lem}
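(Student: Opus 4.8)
The plan is to show that conditioning a Markov field in $\mathscr{S}$ on a Borel event $E$ with $\nu(E)>0$ again yields a measure in $\mathscr{S}$. I would first observe that it suffices to prove the claim for $E$ a \emph{cylinder event}, i.e. $E\in\mathcal{A}_N$ for some $N\geq1$. Indeed, an arbitrary Borel $E$ with $\nu(E)>0$ can be approximated by cylinder sets: choose $E'\in\mathcal{A}_N$ with $\nu(E\triangle E')$ as small as we like, and then the two-dimensional marginals $\nu_E(\mathbb{X}_n=s,\mathbb{X}_m=t)$ and $\nu_{E'}(\mathbb{X}_n=s,\mathbb{X}_m=t)$ differ by a quantity controlled by $\nu(E\triangle E')/\nu(E)$, uniformly in $n,m$. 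Since the $\liminf$ defining membership in $\mathscr{S}$ is over $f(n,m)\to\infty$, and since a uniformly small perturbation of the marginals changes this $\liminf$ by a uniformly small amount, it is enough to get a \emph{uniform positive lower bound} on the relevant $\liminf$ for all cylinder conditionings — more precisely, one shows $\liminf_{f(n,m)\to\infty}\nu_{E'}(\mathbb{X}_n=s,\mathbb{X}_m=t)>0$ for all cylinder $E'$, with the bound independent of the diameter of $E'$, and then lets $\nu(E\triangle E')\to0$.

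So reduce to $E\in\mathcal{A}_N$. Fix $s,t\in\mathbb{S}$ and $n,m$ with $f(n,m)=\min\{|n|,|m|,|n-m|\}$ large; in particular we may assume $n,m>N$ (in the two-sided case, both coordinates of $\mathbb{X}_n,\mathbb{X}_m$ lie outside $[-N,N]$) and $|n-m|$ large. The Markov-field property \ref{eq:2} is the key structural input: conditioning on $\mathbb{X}_N$ decouples $\mathcal{A}_N$ (hence $E$) from $\sigma(X_k:|k|>N)$. Concretely, writing things in terms of the underlying chain,
$$\nu_E(\mathbb{X}_n=s,\mathbb{X}_m=t)=\sum_{r\in\mathbb{S}}\nu_E(\mathbb{X}_N=r)\,\nu(\mathbb{X}_n=s,\mathbb{X}_m=t\mid\mathbb{X}_N=r),$$
since the conditional law of $(\mathbb{X}_n,\mathbb{X}_m)$ given $\mathbb{X}_N$ is the same under $\nu$ and under $\nu_E$ by the Markov-field property. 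Now $\nu_E(\mathbb{X}_N=r)$ is nonzero for at least one $r=r_0$, say with $\nu_E(\mathbb{X}_N=r_0)=:c>0$ (depending on $E$, but that is fine). It remains to bound $\nu(\mathbb{X}_n=s,\mathbb{X}_m=t\mid\mathbb{X}_N=r_0)$ from below by a constant depending only on $\nu$ (not on $E$) once $n,m,n-m$ are large. This last bound is exactly the content of $\nu\in\mathscr{S}$ applied with three points $N$ (or $-N$), $n$, $m$: using $\nu(\mathbb{X}_N=r_0,\mathbb{X}_n=s)\geq\varepsilon$ and $\nu(\mathbb{X}_N=r_0)\leq1$ together with the Markov decomposition through $\mathbb{X}_N$ and $\mathbb{X}_n$, one extracts $\nu(\mathbb{X}_n=s,\mathbb{X}_m=t\mid\mathbb{X}_N=r_0)\geq\nu(\mathbb{X}_N=r_0,\mathbb{X}_n=s,\mathbb{X}_m=t)\geq\varepsilon'$ for some $\varepsilon'>0$ uniform in large $n,m$, by the defining property of $\mathscr{S}$ applied to a triple of indices each pair of which is far apart. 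Putting the pieces together gives $\nu_E(\mathbb{X}_n=s,\mathbb{X}_m=t)\geq c\,\varepsilon'>0$ for all large $n,m$, so $\nu_E\in\mathscr{S}$; and $\mathscr{S}\subset\mathscr{R}$ was already noted in the text.

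The main obstacle I anticipate is the bookkeeping in the two-sided case: $\mathbb{X}_n=(X_{-n},X_n)$ is a \emph{pair} of coordinates, so "conditioning on $\mathbb{X}_N$" means conditioning on $(X_{-N},X_N)$, and one has to be careful that the three indices $N,n,m$ actually give rise to pairwise-far \emph{coordinate} indices in $\mathbb{Z}$ so that the $\mathscr{S}$-hypothesis (which is stated for the $\mathbb{X}$'s, i.e. for pairs) applies cleanly — this is where the function $f(n,m)=\min\{|n|,|m|,|n-m|\}$ and the requirement $n,m>N$ do the work, but it needs to be checked that $\mathscr{S}$ really does control joint probabilities of a pair of $\mathbb{X}$'s rather than just two coordinates, and indeed it does by definition. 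The other delicate point is making the approximation argument in the first paragraph fully uniform; but since Hellinger-type or total-variation estimates between $\nu_E$ and $\nu_{E'}$ are bounded by $2\nu(E\triangle E')/\nu(E)$ uniformly over all events, and this bound does not see $n$ or $m$, the $\liminf$ passes to the limit without trouble.
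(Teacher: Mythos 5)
Your overall architecture matches the paper's (reduce to cylinder sets $E\in\mathcal{A}_N$, use the Markov field property to decouple $E$ from distant coordinates, invoke the $\mathscr{S}$ bound, then approximate a general $E$ by cylinders with a total-variation estimate). But the central estimate is not justified, and the justification you offer is wrong. You need a lower bound on $\nu\left(\mathbb{X}_{n}=s,\mathbb{X}_{m}=t\mid\mathbb{X}_{N}=r_{0}\right)$ that is uniform in large $n,m$, and you claim it follows from ``the defining property of $\mathscr{S}$ applied to a triple of indices each pair of which is far apart.'' This fails for two reasons. First, $\mathscr{S}$ only controls \emph{two-point} marginals; it says nothing about $\nu\left(\mathbb{X}_{N}=r_{0},\mathbb{X}_{n}=s,\mathbb{X}_{m}=t\right)$, and a three-point lower bound does not follow from two-point lower bounds. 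Second, and more fundamentally, the index $N$ is \emph{fixed} by the cylinder, so for any pair involving $N$ we have $f\left(N,n\right)=\min\left\{\left|N\right|,\left|n\right|,\left|N-n\right|\right\}\leq\left|N\right|$ bounded: the $\liminf$ in the definition of $\mathscr{S}$ is taken as $f\to\infty$ and therefore never constrains pairs anchored at $N$. So $\mathscr{S}$ gives you no control even over $\nu\left(\mathbb{X}_{N}=r_{0},\mathbb{X}_{n}=s\right)$.

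The missing idea --- which is the crux of the paper's proof --- is to decouple at a \emph{moving} intermediate coordinate rather than at the fixed boundary $\mathbb{X}_{N}$. Writing, for $n>2N$,
$$\nu_{E}\left(\mathbb{X}_{n}=t\right)=\sum_{u}\nu_{E}\left(\mathbb{X}_{\left\lfloor n/2\right\rfloor}=u\right)\nu\left(\mathbb{X}_{n}=t\mid\mathbb{X}_{\left\lfloor n/2\right\rfloor}=u\right)\geq\sum_{u}\nu_{E}\left(\mathbb{X}_{\left\lfloor n/2\right\rfloor}=u\right)\nu\left(\mathbb{X}_{n}=t,\mathbb{X}_{\left\lfloor n/2\right\rfloor}=u\right),$$
one only ever invokes $\mathscr{S}$ for the pair $\left(\left\lfloor n/2\right\rfloor,n\right)$, for which $f\to\infty$, and the weights $\nu_{E}\left(\mathbb{X}_{\left\lfloor n/2\right\rfloor}=u\right)$ sum to $1$, so no lower bound on any individual conditional probability given $E$ is needed; this yields the uniform bound $\liminf_{n}\nu_{E}\left(\mathbb{X}_{n}=t\right)\geq p$ with $p$ the $\mathscr{S}$-constant of $\nu$, and the two-point bound $p^{2}$ follows by one more application of the Markov field property at $\mathbb{X}_{n}$. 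A secondary, fixable issue: your constant $c=\nu_{E}\left(\mathbb{X}_{N}=r_{0}\right)$ ``depending on $E$'' contradicts the uniformity you yourself require for the approximation step (the approximating cylinders change as $\nu\left(E\triangle E'\right)\to0$); you would need to note, e.g.\ by pigeonhole, that $c\geq1/\left|\mathbb{S}\right|$ can be taken uniform. With the paper's intermediate-point argument this issue disappears, since the cylinder-set bound $p^{2}$ is uniform over all cylinders.
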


\begin{proof}
Since $\nu\in\mathscr{S}$ we have that
$$p:=\min_{s,t\in\mathbb{S}}\left\{ \liminf_{f\left(n,m\right)\to\infty}\nu\left(\mathbb{X}_{n}=s,\mathbb{X}_{m}=t\right)\right\}>0.$$
We first consider sets of some element $\mathcal{A}_{k}$ of the natural filtration. Let $E\in\mathcal{A}_{k}$ with $\nu\left(E\right)>0$. Then for every $t\in\mathbb{S}$ and $2k<n$ we have
\begin{align*}
\nu_{E}\left(\mathbb{X}_{n}=t\right)
&=\sum_{s\in\mathbb{S}\left(E,\left\lfloor n/2\right\rfloor\right)}\nu_{E}\left(\mathbb{X}_{\left\lfloor n/2\right\rfloor }=s\right)\nu\left(\mathbb{X}_{n}=t\mid E,\mathbb{X}_{\left\lfloor n/2\right\rfloor}=s\right)\\
\left(\text{\ref{eq:2}}\right)
&=\sum_{s\in\mathbb{S}\left(E,\left\lfloor n/2\right\rfloor\right)}\nu_{E}\left(\mathbb{X}_{\left\lfloor n/2\right\rfloor}=s\right)\nu\left(\mathbb{X}_{n}=t\mid\mathbb{X}_{\left\lfloor n/2\right\rfloor }=s\right)\\
&\geq\sum_{s\in\mathbb{S}\left(E,\left\lfloor n/2\right\rfloor\right)}\nu_{E}\left(\mathbb{X}_{\left\lfloor n/2\right\rfloor }=s\right)\nu\left(\mathbb{X}_{n}=t,\mathbb{X}_{\left\lfloor n/2\right\rfloor }=s\right),
\end{align*}
where for $j\geq1$ we write
$$\mathbb{S}\left(E,j\right)=\left\{ s\in\mathbb{S}:\nu\left(E,\mathbb{X}_{j}=s\right)>0\right\}.$$
We then see that
\begin{equation}
\label{eq:5}
\liminf_{n\to\infty}\nu_{E}\left(\mathbb{X}_{n}=t\right)\geq\liminf_{n\to\infty}\sum_{s\in\mathbb{S}\left(E,\left\lfloor n/2\right\rfloor \right)}\nu_{E}\left(\mathbb{X}_{\left\lfloor n/2\right\rfloor }=s\right)p=p>0.
\end{equation}
Now for $n,m$ with $k<n<m$ we have that
\begin{align*}
\nu_{E}\left(\mathbb{X}_{n}=s,\mathbb{X}_{m}=t\right)
&=\frac{\nu\left(\mathbb{X}_{n}=s,\mathbb{X}_{m}=t\right)}{\nu\left(E\right)}\nu\left(E\mid\mathbb{X}_{n}=s,\mathbb{X}_{m}=t\right)\\
\left(\text{\ref{eq:2}}\right)
&=\frac{\nu\left(\mathbb{X}_{n}=s,\mathbb{X}_{m}=t\right)}{\nu\left(E\right)}\nu\left(E\mid\mathbb{X}_{n}=s\right)\\
&=\nu\left(\mathbb{X}_{m}=t\mid\mathbb{X}_{n}=s\right)\nu_{E}\left(\mathbb{X}_{n}=s\right),
\end{align*}
and similar bound holds for $k<m<n$. Then using \ref{eq:5} we conclude that
$$\liminf_{f\left(n,m\right)\to\infty}\nu_{E}\left(\mathbb{X}_{n}=t,\mathbb{X}_{m}=t\right)\geq p^{2}>0,$$
and this completes the proof for sets of $\mathcal{A}_k$ for any $k\geq1$.

Let $E$ be a general set with $\nu\left(E\right)>0$, and some arbitrary $\epsilon>0$. Approximate $E$ by a set $E_0\in\mathcal{A}_k$ for some $k\geq1$, such that
$$\frac{\nu\left(E_{0}\right)}{\nu\left(E\right)}\geq1-\epsilon\quad\text{and}\quad\nu\left(E_{0}\backslash E\right)<\epsilon.$$
Then for every $n,m$ it holds that
\begin{align*}
\nu_{E}\left(\mathbb{X}_{n}=s,\mathbb{X}_{m}=t\right)
&\geq\nu_{E}\left(\mathbb{X}_{n}=s,\mathbb{X}_{m}=t,E_{0}\right)\\
&=\frac{\nu\left(E_{0}\right)}{\nu\left(E\right)}\nu_{E_{0}}\left(\mathbb{X}_{n}=s,\mathbb{X}_{m}=t\right)-\frac{\nu\left(\mathbb{X}_{n}=s,\mathbb{X}_{m}=t,E_{0}\backslash E\right)}{\nu\left(E\right)}\\
&\geq\left(1-\epsilon\right)\nu_{E_{0}}\left(\mathbb{X}_{n}=s,\mathbb{X}_{m}=t\right)-\frac{\epsilon}{\nu\left(E\right)}.
\end{align*}
Since $E_0\in\mathcal{A}_k$, by the first part of the proof we get
$$\liminf_{f\left(n,m\right)\to\infty}\nu_{E}\left(\mathbb{X}_{n}=s,\mathbb{X}_{m}=t\right)\geq\left(1-\epsilon\right)p^2-\frac{\epsilon}{\nu\left(E\right)}.$$
As $\epsilon>0$ is arbitrary we see that $\liminf_{f\left(n,m\right)\to\infty}\nu_{E}\left(\mathbb{X}_{n}=s,\mathbb{X}_{m}=t\right)\geq p^{2}>0$.
\end{proof}

We will now prove Theorem \ref{Theorem C}. In doing so we exploit the full generality of the Markov field property \ref{eq:2} upon the Markov chain property \ref{eq:1}. It is not hard to see, although we prove it in the course of the following proof, that when we condition a one-sided Markov chain on a one-sided tail event, the result is again a Markov chain. However, when we condition a two-sided Markov chain on a double tail event, the result may no longer be a Markov chain but rather a Markov field.

\begin{proof}[Proof of Theorem \ref{Theorem C}]
Recall that by Observation \ref{Observation} we need to show that the class of Markov fields that are in $\mathscr{S}$ is a Kakutani class that satisfies property \ref{eq:3}. That it is a Kakutani class is the essence of Theorem \ref{Theorem B}, so we will establish property \ref{eq:3}.

Let $\nu\in\mathscr{S}$ be a Markov field and let $T$ be a tail event, either  in the one-sided tail or in the double tail, with $\nu\left(T\right)>0$. We show that $\nu_T\in\mathscr{S}$, that $\nu_T$ is a Markov field and that $\nu_T\lequ\nu$. The fact that $\nu_T\in\mathscr{S}$ follows from Lemma \ref{Lemma: Hereditary} simply because $\nu\left(T\right)>0$. To see that $\nu_T$ is a Markov field, observe that if $E\in\sigma\left(\mathbb{X}_{k}:\left|k\right|>n\right)$ for some $n\geq1$,
\begin{align*}
\nu_{T}\left(E\mid\mathbb{X}_{n},\mathcal{A}_{n-1}\right)
&=\frac{\nu\left(E,T\mid\mathbb{X}_{n},\mathcal{A}_{n-1}\right)}{\nu\left(T\mid\mathbb{X}_{n},\mathcal{A}_{n-1}\right)}\\
\left(\text{\ref{eq:2}}\right)
&=\frac{\nu\left(E,T\mid\mathbb{X}_{n}\right)}{\nu\left(T\mid\mathbb{X}_{n}\right)}=\nu_{T}\left(E\mid\mathbb{X}_{n}\right),
\end{align*}
showing that $\nu_T$ is a Markov field. We then show that $\nu_T\lequ\nu$. Obviously $\nu_T\lac\nu$ so we show that $\nu\lac\nu_T$. Let us first consider the marginals. Suppose that $\nu\left(\mathbb{X}_{n}=s\right)>0$ for some $n\geq1$ and $s\in\mathbb{S}$. Fix some $t\in\mathbb{S}$ for which $\nu_T\left(\mathbb{X}_n=s,\mathbb{X}_{n+1}=t\right)>0$. Then
\begin{align*}
\nu_{T}\left(\mathbb{X}_{n}=s\right)
&\geq\nu_{T}\left(\mathbb{X}_{n}=s,\mathbb{X}_{n+1}=t\right)\\
&=\frac{\nu\left(\mathbb{X}_{n}=s,\mathbb{X}_{n+1}=t\right)}{\nu\left(T\right)}\nu\left(T\mid\mathbb{X}_{n}=s,\mathbb{X}_{n+1}=t\right)\\
\left(\text{\ref{eq:2}}\right)
&=\frac{\nu\left(\mathbb{X}_{n}=s,\mathbb{X}_{n+1}=t\right)}{\nu\left(T\right)}\nu\left(T\mid\mathbb{X}_{n+1}=t\right)\\
&=\frac{\nu\left(\mathbb{X}_{n}=s,\mathbb{X}_{n+1}=t\right)}{\nu\left(\mathbb{X}_{n+1}=t\right)}\nu_{T}\left(\mathbb{X}_{n+1}=t\right)\\
&=\nu\left(\mathbb{X}_{n}=s,\mathbb{X}_{n+1}=t\right)>0.
\end{align*}
Finally, let $E\in\mathcal{A}_{k}$ for some $k\geq1$ with $\nu\left(E\right)>0$. Fix some $s\in\mathbb{S}$ with $\nu\left(E,\mathbb{X}_{k+1}=s\right)>0$. In particular $\nu\left(\mathbb{X}_{k+1}=s\right)>0$ and by the equivalence of the marginals also $\nu_{T}\left(\mathbb{X}_{k+1}=s\right)>0$. It follows that
\begin{align*}
\nu_{T}\left(E\right)
&\geq\nu\left(T,E\mid\mathbb{X}_{k+1}=s\right)\nu\left(\mathbb{X}_{k+1}=s\right)\\
\left(\text{\ref{eq:2}}\right)
&=\nu\left(T\mid\mathbb{X}_{k+1}=s\right)\nu\left(E\mid\mathbb{X}_{k+1}=s\right)\nu\left(\mathbb{X}_{k+1}=s\right)\\
&=\nu_{T}\left(\mathbb{X}_{k+1}=s\right)\nu\left(E,\mathbb{X}_{k+1}=s\right)\frac{\nu\left(T\right)}{\nu\left(\mathbb{X}_{k+1}=s\right)}>0,
\end{align*}
showing that $\nu\lac\nu_T$.
\end{proof}

\bibliographystyle{acm}
\bibliography{References}

\begin{thebibliography}{10}

\bibitem{avraham2022absolutely}
{\sc Avraham-Re’em, N.}
\newblock On absolutely continuous invariant measures and krieger-type of
  markov subshifts.
\newblock {\em Journal d'Analyse Math{\'e}matique 147}, 1 (2022), 201--253.

\bibitem{bradley2005basic}
{\sc Bradley, R.~C.}
\newblock Basic properties of strong mixing conditions. a survey and some open
  questions.
\newblock {\em Probability surveys 2\/} (2005), 107--144.

\bibitem{brown1974orthogonality}
{\sc Brown, G., and Moran, W.}
\newblock On orthogonality of riesz products.
\newblock In {\em Mathematical Proceedings of the Cambridge Philosophical
  Society\/} (1974), vol.~76, Cambridge University Press, pp.~173--181.

\bibitem{chandgotia2014}
{\sc Chandgotia, N., Han, G., Marcus, B., Meyerovitch, T., and Pavlov, R.}
\newblock One-dimensional markov random fields, markov chains and topological
  markov fields.
\newblock {\em Proceedings of the American Mathematical Society 142}, 1 (2014),
  227--242.

\bibitem{cohn1981paper}
{\sc Cohn, H.}
\newblock On a paper by $\text{D}$oeblin on non-homogeneous markov chains.
\newblock {\em Advances in Applied Probability 13}, 2 (1981), 388--401.

\bibitem{danilenkolemanczyk}
{\sc Danilenko, A., and Lema{\'n}czyk, M.}
\newblock K-property for maharam extensions of non-singular bernoulli and
  markov shifts.
\newblock {\em Ergodic Theory and Dynamical Systems\/} (2018), 1--30.

\bibitem{danilenko2011ergodic}
{\sc Danilenko, A., and Silva, C.}
\newblock Ergodic theory: non-singular transformations.
\newblock {\em Mathematics of Complexity and Dynamical Systems\/} (2011),
  329--356.

\bibitem{durrett2019probability}
{\sc Durrett, R.}
\newblock {\em Probability: theory and examples}, vol.~49.
\newblock Cambridge university press, 2019.

\bibitem{engelbertshiryaev}
{\sc Engelbert, H., and Shiryaev, A.}
\newblock On absolute continuity and singularity of probability measures.
\newblock {\em Banach Cent. Publ 6\/} (1980), 121--132.

\bibitem{georgii2011gibbs}
{\sc Georgii, H.}
\newblock {\em Gibbs measures and phase transitions}, vol.~9.
\newblock Walter de Gruyter, 2011.

\bibitem{grenander1950stochastic}
{\sc Grenander, U.}
\newblock Stochastic processes and statistical inference.
\newblock {\em Arkiv f{\"o}r matematik 1}, 3 (1950), 195--277.

\bibitem{kabanovlipcershiryaev1978}
{\sc Kabanov, Y., Liptser, R., and Shiryaev, A.}
\newblock Absolute continuity and singularity of locally absolutely continuous
  probability distributions. i.
\newblock {\em Mathematics of the USSR-Sbornik 149}, 3 (1978), 364--415.

\bibitem{kabanovlipcershiryaev1980}
{\sc Kabanov, Y., Liptser, R., and Shiryaev, A.}
\newblock Absolute continuity and singularity of locally absolutely continuous
  probability distributions. ii.
\newblock {\em Mathematics of the USSR-Sbornik 36}, 1 (1980), 31.

\bibitem{kakutani}
{\sc Kakutani, S.}
\newblock On equivalence of infinite product measures.
\newblock {\em Annals of Mathematics\/} (1948), 214--224.

\bibitem{kosloff2019proving}
{\sc Kosloff, Z.}
\newblock Proving ergodicity via divergence of time averages.
\newblock {\em Studia Mathematica 248\/} (2019), 191--215.

\bibitem{lepage1972}
{\sc LePage, R., and Mandrekar, V.}
\newblock Equivalence-singularity dichotomies from zero-one laws.
\newblock {\em Proceedings of the American Mathematical Society 31}, 1 (1972),
  251--254.

\bibitem{lepage1975}
{\sc LePage, R., and Mandrekar, V.}
\newblock On likelihood ratios of measures given by markov chains.
\newblock {\em Proceedings of the American Mathematical Society 52}, 1 (1975),
  377--380.

\bibitem{lodkin1971absolute}
{\sc Lodkin, A.~A.}
\newblock Absolute continuity of measures corresponding to markov processes
  with discrete time.
\newblock {\em Theory of Probability \& Its Applications 16}, 4 (1971),
  690--694.

\bibitem{pfeiffer}
{\sc Pfeiffer, P.}
\newblock Equivalence of totally finite measures on infinite product spaces.
\newblock {\em Annals of Mathematics\/} (1952), 520--536.

\bibitem{ritter1979kakutani}
{\sc Ritter, G.}
\newblock On kakutani's dichotomy theorem for infinite products of not
  necessarily independent functions.
\newblock {\em Mathematische Annalen 239}, 1 (1979), 35--53.

\bibitem{sato1991}
{\sc Sato, H.}
\newblock Absolute continuity of locally equivalent markov chains.
\newblock {\em Memoirs of the Faculty of Science, Kyushu University. Series A,
  Mathematics 45}, 2 (1991), 285--308.

\bibitem{shiryaev1995p}
{\sc Shiryaev, A.}
\newblock {\em Probability, Graduate Texts in Mathematics}, vol.~95.
\newblock 1995.

\bibitem{shiryaev1978}
{\sc Shiryaev, A.~N.}
\newblock Absolute continuity and singularity of probability measures in
  functional spaces.
\newblock In {\em Proceedings of the International Congress of Mathematicians,
  Helsinki\/} (1978), pp.~209--225.

\bibitem{zachary1983}
{\sc Zachary, S.}
\newblock Countable state space markov random fields and markov chains on
  trees.
\newblock {\em The Annals of Probability\/} (1983), 894--903.

\end{thebibliography}

\nocite{*}

\end{document}